\begin{document}
\title{Schubert coefficients of sparse paving matroids}
\author[Schubert coefficients of sparse paving matroids]{Jon Pål Hamre}

\begin{abstract}
The Chow class of the closure of the torus orbit of a point in a Grassmannian only depends on the matroid associated to the point. The Chow class can be extended to a matroid invariant of arbitrary matroids. We call the coefficients appearing in the expansion of the Chow class in the Schubert basis the Schubert coefficients of the matroid. These Schubert coefficients are conjectured by Berget and Fink to be non-negative. We compute the Schubert coefficients of a disconnected matroid in terms of the Schubert coefficients of its connected components. And we compute the Schubert coefficients for all sparse paving matroids, and confirm their non-negativity.
\end{abstract}

\maketitle


\section{Introduction}

To a point $x$ in a complex Grassmannian $G(r,n)$ we associate two objects. The matroid $M_x$ whose bases are given by the non-zero Plücker coordinates of $x$ and the projective, toric subvariety of $G(r,n)$ given by the closure of the orbit of $x$ under the action of the algebraic torus $T= (\CC^*)^n$, denoted $\T$. The class of $\T$ in the Chow ring of $G(r,n)$ is an additive, valuative matroid invariant. Using results of Fink, Derksen, Speyer and Berget in \cite{DerksenFink}, \cite{FinkSpeyer} and \cite{BergetFink} we consider the unique valuative extension of this invariant to all matroids, not just those representable over $\CC$. We denote it by $\Sc(M) \in A^\bullet(G(r,n))$. The Chow ring $A^\bullet(G(r,n))$ is generated as a free abelian group by the Schubert cycles $\sigma_\lambda$ where $\lambda$ ranges over all partitions whose Young diagram is contained in the $r\times (n-r)$ rectangle. Hence $\Sc(M)$ can be expressed as the sum $\Sc(M) = \sum_\lambda d_\lambda(M) \sigma_\lambda$. We call the integers $d_\lambda(M)$ the Schubert coefficients of $M$.

The non-negativity of the Schubert coefficients was conjectured by Berget and Fink in \cite{BergetFink}. We explain how to compute the Schubert coefficients of a disconnected matroid in terms of the Schubert coefficients of its connected components in \cref{thm:disconnected}. And we compute the Schubert coefficients of sparse paving matroids and confirm their non-negativity, supporting the conjecture of \cite{BergetFink}.
\begin{thm}
    \label{thm:1}
    Let $M$ be a connected sparse paving matroid of rank $r$ on $[n]$ then $d_\lambda(M) = d_\lambda(U_{r,n})$ for all partitions $\lambda$ except for the complement of the hook partition $h^c$ for which $d_{h^c}(M) = \beta(M)$.
\end{thm}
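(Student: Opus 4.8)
The plan is to peel off the circuit–hyperplanes of $M$ one corner of the hypersimplex at a time, reducing everything to a single computation. Write $\Delta$ for the base polytope of $U_{r,n}$ and, for an $r$-set $H$, let $\Gamma_H=\Delta\cap\{x:\sum_{j\in H}x_j\ge r-1\}$ be the cell clipped off near the vertex $e_H$; one checks that $\Gamma_H$ is the base polytope of the matroid $\Lambda_{r,n}$ obtained from $U_{r,r+1}$ by replacing one element with a parallel class of size $n-r$. If $H_1,\dots,H_\kappa$ are the circuit–hyperplanes of $M$, then the sparse paving condition $|H_i\triangle H_j|\ge 4$ forces the $\Gamma_{H_i}$ to have pairwise disjoint interiors, so that $P_M$ together with $\Gamma_{H_1},\dots,\Gamma_{H_\kappa}$ is a subdivision of $\Delta$ into maximal matroid base polytopes whose interior cells are base polytopes of disconnected matroids. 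Since $\Sc$ is valuative, and $\Sc$ of a disconnected matroid contributes nothing in the codimension of $\sigma_{h^c}$ (see \cref{thm:disconnected}), additivity along this subdivision gives
\[
\Sc(U_{r,n})=\Sc(M)+\kappa\,\Sc(\Lambda_{r,n}),\qquad\text{i.e.}\qquad d_\lambda(M)=d_\lambda(U_{r,n})-\kappa\,d_\lambda(\Lambda_{r,n})\ \text{ for all }\lambda.
\]
The same subdivision applied to the valuative invariant $\beta$ (which vanishes on every disconnected matroid) gives $\beta(M)=\beta(U_{r,n})-\kappa\,\beta(\Lambda_{r,n})$; and $\beta(\Lambda_{r,n})=\beta(U_{r,r+1})=1$, because $\beta$ is unchanged by parallel extension.

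Next I would compute $\Sc(\Lambda_{r,n})$ by identifying its torus orbit closure with a Schubert variety. Realize $\Lambda_{r,n}$ as the row space $V_0$ of $[\,I_r\mid\mathbf 1\mid\cdots\mid\mathbf 1\,]$. For every $t\in T$ the space $t\cdot V_0$ is contained in $\langle e_1,\dots,e_r\rangle+\langle\sum_{j>r}t_je_j\rangle$, which has dimension $r+1$ and in which both $t\cdot V_0$ and $\langle e_1,\dots,e_r\rangle$ are hyperplanes; hence $\dim\bigl((t\cdot V_0)\cap\langle e_1,\dots,e_r\rangle\bigr)\ge r-1$. Therefore $\overline{T\cdot V_0}$ lies inside $\Omega:=\{V:\dim(V\cap\langle e_1,\dots,e_r\rangle)\ge r-1\}$, which for a generic completion of the coordinate flag is exactly the Schubert variety of class $\sigma_{h^c}$ — an irreducible variety of dimension $n-1$. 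Because $\Lambda_{r,n}$ is connected, $\overline{T\cdot V_0}$ is likewise irreducible of dimension $n-1$, so it must equal $\Omega$ and $\Sc(\Lambda_{r,n})=\sigma_{h^c}$. In particular $d_\lambda(\Lambda_{r,n})=0$ for $\lambda\ne h^c$ and $d_{h^c}(\Lambda_{r,n})=1$.

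Combining: $d_\lambda(M)=d_\lambda(U_{r,n})$ whenever $\lambda\ne h^c$, and $d_{h^c}(M)=d_{h^c}(U_{r,n})-\kappa$. The equality $d_{h^c}(U_{r,n})=\beta(U_{r,n})=\binom{n-2}{r-1}$ is the case $\kappa=0$ of the theorem and is immediate from the known expansion of $\Sc(U_{r,n})$ (equivalently, $\int_{G(r,n)}\Sc(U_{r,n})\cdot\sigma_h=\beta(U_{r,n})$). Together with $\beta(M)=\beta(U_{r,n})-\kappa$ from the first paragraph, this yields $d_{h^c}(M)=\beta(M)$, as claimed. (When $r\le 1$ or $r\ge n-1$ the only connected sparse paving matroid of rank $r$ on $[n]$ is $U_{r,n}$ itself, so there is nothing to prove in those cases.)

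I expect the main obstacle to be the second paragraph: showing that $\overline{T\cdot V_0}$ fills up all of $\Omega$, which rests on the connectedness of $\Lambda_{r,n}$ (so that the orbit closure is full-dimensional) and on recognizing $\Omega$ as the Schubert variety attached to $h^c$. The polytopal claims in the first paragraph — that each $\Gamma_H$ is a matroid base polytope and that the clipped cells glue into a genuine subdivision whose interior cells are disconnected — should be routine consequences of the sparse paving hypothesis.
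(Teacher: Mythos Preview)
Your argument is correct and follows the same overall architecture as the paper: subdivide $P(U_{r,n})$ into $P(M)$ and copies of the minimal matroid polytope, invoke valuativity and additivity of $d_\lambda$, and reduce to showing $\Sc(T_{r,n})=\sigma_{h^c}$ (your $\Lambda_{r,n}$ is exactly the paper's $T_{r,n}$).

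Where you genuinely diverge is in the proof of that key lemma. The paper proves $\Sc(T_{r,n})=\sigma_{h^c}$ indirectly via a degree--volume identity (\cref{prop:VolDeg}): it computes $\Vol(P(T_{r,n}))=\binom{n-2}{r-1}$, computes $c_h=|SYT(h)|=\binom{n-2}{r-1}$ via the hook-length formula, uses Speyer's $d_{h^c}(T_{r,n})=\beta(T_{r,n})=1$, and then appeals to non-negativity of the remaining $d_\lambda(T_{r,n})$ (granted by representability) to force them all to vanish. You instead identify $\overline{T\cdot V_0}$ directly with the Schubert variety $X_{h^c}$ for the coordinate flag, using containment plus equality of dimensions plus irreducibility. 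Your route is shorter and more geometric, and it avoids both the hook-length computation and the appeal to non-negativity; indeed the paper records exactly this observation in \cref{rmk:minimal} as a reviewer's suggestion. The paper's route, on the other hand, establishes \cref{prop:VolDeg} along the way, which is of independent use later in the section. A small cosmetic point: your citation of \cref{thm:disconnected} for the vanishing of $d_\lambda$ on the lower-dimensional cells is a detour --- the additivity statement you need is already packaged in \cref{thm:BergetFink} --- and the identity $d_{h^c}(U_{r,n})=\beta(U_{r,n})$ is just Speyer's \cref{thm:Speyer}, which would be cleaner to cite directly.
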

\begin{corollary}
    \label{cor:1}
    Let $M$ be a connected sparse paving matroid. The Schubert coefficients $d_\lambda(M)$ are non-negative.
\end{corollary}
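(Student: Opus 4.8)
The plan is to deduce \cref{cor:1} from \cref{thm:1} together with two standard positivity facts. By \cref{thm:1}, for a connected sparse paving matroid $M$ of rank $r$ on $[n]$ every Schubert coefficient $d_\lambda(M)$ equals $d_\lambda(U_{r,n})$, with the sole exception $d_{h^c}(M) = \beta(M)$. So it suffices to verify that (i) $d_\lambda(U_{r,n}) \ge 0$ for every partition $\lambda$ fitting in the $r\times(n-r)$ rectangle, and (ii) $\beta(M)\ge 0$.

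For (i) the essential point is that the uniform matroid is realizable over $\CC$: a sufficiently general point $x\in G(r,n)$ has matroid $U_{r,n}$, so $\Sc(U_{r,n})$ is literally the class $[\overline{T\cdot x}]$ of an actual torus-orbit closure, hence an effective cycle class on $G(r,n)$. The Schubert classes form a basis of $A^\bullet(G(r,n))$ that is self-dual under the intersection pairing up to complementation of Young diagrams inside the rectangle, so $d_\lambda(U_{r,n})$ is the degree of the product of $\Sc(U_{r,n})$ with the Schubert cycle Poincar\'e-dual to $\sigma_\lambda$. By Kleiman's transversality theorem a general $\mathrm{PGL}_n$-translate of that dual Schubert cycle meets $\overline{T\cdot x}$ properly; since $G(r,n)$ is smooth, the resulting intersection in complementary dimension is a zero-cycle with positive local multiplicities, and its degree is therefore non-negative.

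For (ii) I would invoke the classical theorem of Crapo that the beta invariant $\beta(M)=(-1)^{\operatorname{rk}(M)}\sum_{A\subseteq E(M)}(-1)^{|A|}\operatorname{rk}(A)$ is non-negative for every matroid (indeed strictly positive when $M$ is connected with at least two elements). Combining (i) and (ii) with \cref{thm:1} shows $d_\lambda(M)\ge 0$ for all $\lambda$, proving the corollary. I do not expect any real obstacle here: the corollary is a formal consequence of \cref{thm:1}, and the only ingredients beyond that theorem are the standard ``effective cycles have non-negative Schubert coordinates'' argument for the uniform matroid and the classical inequality $\beta(M)\ge 0$.
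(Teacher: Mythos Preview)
Your proposal is correct and follows essentially the same approach as the paper: deduce everything from \cref{thm:1}, then use that $U_{r,n}$ is representable over $\CC$ (so its Schubert coefficients are non-negative by the intersection-number interpretation) together with the classical non-negativity of $\beta$. The paper states this in a single sentence, while you spell out the Kleiman transversality argument and Crapo's theorem more explicitly, but there is no substantive difference.
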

We suspect that sparse paving matroids are the only matroids such that $d_\lambda(M) = d_\lambda(U_{r,n})$ for all partitions $\lambda \neq h^c$, and make the following conjecture.
\begin{conj}
    \label{conj:sparsePaving}
    Let $M$ be a connected matroid of rank $r$ on $[n]$. $M$ is sparse paving if and only if $d_\lambda(M) = d_\lambda(U_{r,n})$ for all partitions $\lambda \neq h^c$.
\end{conj}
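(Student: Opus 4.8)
The ``if'' direction is immediate from \cref{thm:1}: a connected sparse paving matroid $M$ has $d_\lambda(M)=d_\lambda(U_{r,n})$ for every $\lambda\neq h^c$. The substance is the converse, and the natural line of attack is contraposition: assuming $M$ is connected of rank $r$ on $[n]$ and \emph{not} sparse paving, produce a partition $\lambda\neq h^c$ with $d_\lambda(M)\neq d_\lambda(U_{r,n})$. Two reductions clean up the problem. First, the class of sparse paving matroids is closed under duality, and the isomorphism $G(r,n)\cong G(n-r,n)$ carries $\T$ for $M$ to $\T$ for $M^*$ and $\sigma_\lambda$ to $\sigma_{\lambda^t}$, so $d_\mu(M^*)=d_{\mu^t}(M)$ and the statement is unchanged by passing to $M^*$; since $M$ fails to be sparse paving precisely when $M$ or $M^*$ fails to be paving, we may assume $M$ is not paving, i.e.\ $M$ has a circuit $C$ with $2\le|C|\le r-1$. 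Second, $\Sc(M)$ is pure of codimension $c:=r(n-r)-(n-1)$ for connected $M$, and since the intersection pairing on $A^\bullet(G(r,n))$ is perfect with $\sigma_\lambda\cdot\sigma_{\lambda^c}=1$ in complementary degrees, one has $d_\lambda(M)=\Sc(M)\cdot\sigma_{\lambda^c}$; so the goal becomes to find a partition $\mu$ with $|\mu|=n-1$, $\mu\neq h$ (the hook partition), such that $\Sc(M)\cdot\sigma_\mu\neq\Sc(U_{r,n})\cdot\sigma_\mu$.

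The main tool I would use is valuativity. Slicing the hypersimplex $\Delta_{r,n}$ by a half-space $\{\sum_{i\in S}x_i\le k\}$ with $0<k<\min(|S|,r)$ writes $\Delta_{r,n}$ as a union of two matroid polytopes: $P(N)$, with $N$ the freest matroid satisfying $\operatorname{rk}(S)\le k$, and $P(N')$, with $N'$ the freest matroid satisfying $\operatorname{rk}([n]\setminus S)\le r-k$, meeting along the common facet $P(U_{k,|S|}\oplus U_{r-k,\,n-|S|})$. Inclusion--exclusion of indicator functions together with valuativity of $\Sc$ gives $\Sc(N)=\Sc(U_{r,n})-\Sc(N')+\Sc(U_{k,|S|}\oplus U_{r-k,\,n-|S|})$, and because the interface matroid is disconnected its class is pure of codimension strictly larger than $c$; hence in codimension $c$ we get simply $\Sc(N)=\Sc(U_{r,n})-\Sc(N')$. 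Since $N'$ is connected its class is nonzero, and the point is that $\Sc(N')$ must not be a multiple of $\sigma_{h^c}$: if it were, then $N$ — which is not sparse paving, being a truncation whose unique circuit is the short set $C$ — would satisfy $d_\lambda(N)=d_\lambda(U_{r,n})$ for all $\lambda\neq h^c$, i.e.\ $N$ would be a counterexample. Taking $S=C$ puts us in exactly this situation. The plan is thus: (a) prove $\Sc(N')$ has a nonzero coefficient away from $\sigma_{h^c}$ for the caps $N'$ that arise from short circuits; (b) reduce a general connected non-sparse-paving $M$ to these caps, to sparse paving matroids, and to disconnected pieces by iterating the slicing — being careful that interior cross-sections of a matroid polytope need not be matroid polytopes, so one must either order the cuts suitably or run a downward induction on the number of bases — using \cref{thm:disconnected} for the disconnected contributions.

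I expect step (a) to be the principal obstacle, and it is in effect ``\cref{conj:sparsePaving} for a structured subfamily'': the caps $N'$ are, up to matroid duality, uniform matroids with one enlarged parallel class, so one can hope to compute $\Sc(N')$ by Schubert calculus on $\T$ in the representable case and bootstrapping, or directly from the universal valuative ($\mathcal G$-) invariant. The rigidity statement to be extracted is that lowering the rank of a set of size at most $r-1$ necessarily disturbs some $d_\lambda$ with $\lambda\neq h^c$, in contrast to shaving a single vertex of the hypersimplex — the only move available inside the sparse paving world — which by \cref{thm:1} disturbs only $d_{h^c}$. A complementary, more geometric route to the same end: take $\mu$ to be the near-rectangular partition dual to a near-full hook whose length is governed by $|C|$, realize $\Sc(M)\cdot\sigma_\mu$ as the finite count of points of $\T$ in Schubert position relative to a generic complete flag, and show that the dependency $C$ forces at least one of the configurations counted for $U_{r,n}$ to violate the incidence conditions cutting out $\T$ for $M$, giving $\Sc(M)\cdot\sigma_\mu<\Sc(U_{r,n})\cdot\sigma_\mu$. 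Verifying the base cases — uniform matroids with a prescribed circuit of each size $2,\dots,r-1$, and their duals — and checking that the reduction in (b) indeed terminates in them and in sparse paving matroids would complete the proof of \cref{conj:sparsePaving}.
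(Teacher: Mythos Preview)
The paper does not prove \cref{conj:sparsePaving}. It establishes only one implication --- sparse paving $\Rightarrow$ $d_\lambda(M)=d_\lambda(U_{r,n})$ for all $\lambda\neq h^c$ --- via \cref{thm:1}, and offers the converse as an open conjecture, supported only by the example at the end of Section~5 showing that $T_{2,5}$ (paving but not sparse paving) has $d_{(1,1)}(T_{2,5})\neq d_{(1,1)}(U_{2,5})$. There is thus no proof in the paper to compare your proposal against.

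On the proposal itself: you have the labels reversed. In the biconditional as stated, ``$M$ sparse paving $\Rightarrow$ coefficients agree off $h^c$'' is the \emph{only if} direction, and that is what \cref{thm:1} supplies; the open direction is the \emph{if} direction. More substantively, what you have written for the open direction is a program, not a proof, and you acknowledge as much. The duality reduction and the hyperplane-slice valuation identity are reasonable opening moves, and your caps $N'$ are indeed panhandle matroids --- exactly the family the paper singles out in its closing example. But step~(a), showing $\Sc(N')$ is not a multiple of $\sigma_{h^c}$, is left entirely unaddressed; your two suggested routes (computing $\Sc$ via the $\mathcal G$-invariant, or a direct incidence count) are just pointers. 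Step~(b) is also only gestured at: passing from a general non-sparse-paving $M$ to a single-cut $N$ requires an inductive scheme you have not set up, and the caution you flag about non-matroidal cross-sections is a genuine obstruction, not a detail. As written, this is a plausible outline for attacking the conjecture, which is precisely the status the paper gives the statement.
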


Section 2 consists of preliminaries on Grassmannians, matroids and valuations. Section 3 introduces the concept of Schubert coefficients, and recaps what is previously known. Section 4 is about Schubert coefficients of disconnected matroids. Section 5 contains the proof of \cref{thm:1} and some evidence for \cref{conj:sparsePaving}.
\section{Preliminaries}

\subsection{Grassmannians}
We denote the complex Grassmannian of sub-vector spaces of $\CC^n$ of dimension $r$ by $G(r,n)$. It is a smooth projective variety embedded into $\PP^{\binom{n}{r}-1}$ by the well known Plücker embedding; see \cite{EisHar}. For a point \(x\in G(r,n)\) we denote the corresponding \(r\)-dimensional subspace of \(\CC^n\) by \(L(x)\). The algebraic torus $T = (\CC^*)^n$ acts on $G(r,n)$ by dilating the coordinates of $\CC^n$. We will be interested in the closure in $G(r,n)$ of the $T$ orbit of $x \in G(r,n)$ denoted $\T$.

\subsection{Partitions}
\begin{definition}
    A partition $\lambda$ is a tuple $\lambda = (\lambda_1, \dots , \lambda_k)$ of positive integers with $\lambda_1 \geq \lambda_2 \geq \dots \geq \lambda_k$.
\end{definition}
We think of partitions in terms of their Young diagrams (sometimes called Ferrers diagrams), i.e.\ a collection of boxes with $\lambda_i$ boxes in row $i$. We write $|\lambda| = \sum_i\lambda_i$ for the number of boxes. There is a partial ordering of partitions defined by $\lambda \subseteq \mu$ if the Young diagram of $\lambda$ is contained in the Young diagram of $\mu$. We will be interested in the partitions $\lambda \subseteq r\times(n-r)$ where $r\times (n-r)$ is the partition whose Young diagram is the $r \times (n-r)$ rectangle. For each $\lambda \subseteq r\times (n-r)$ let $\lambda^c$ be the partition given by the complement of the Young diagram of $\lambda$ in the $r \times (n-r)$ rectangle, rotated by 180 degrees. We denote the so called hook partition by $h$, i.e.\ $h = (n-r,1,\dots,1)$ with $r-1$ ones. Notice that $h^c = (r-1)\times(n-r-1)$. The Young diagrams of $h$ and $h^c$ for $r=3$ and $n=7$ are shown below.
\begin{equation*}
        \scalebox{0.8}{\young(~~~~,~,~)} \hspace{2cm} \scalebox{0.8}{\young(~~~,~~~)}
    \end{equation*}
\begin{definition}
    A Young tableau of shape $\lambda$ is a filling of the boxes of the Young diagram of $\lambda$ with the integers $1,2,\dots, |\lambda|$. A Young tableau is called standard if it is increasing in rows and columns.
\end{definition}
The combinatorics of partitions and Young tableaux appears in many areas of mathematics, such as in the description of the Chow ring of Grassmannians.

\subsection{Chow ring of Grassmannians}
For a partition \(\lambda \subseteq r\times (n-r)\) the associated jumping sequence \(J = (j_i)_{i = 1}^r\) is defined by \(j_i = n-r+i-\lambda_i\). To obtain \(J\) from \(\lambda\), label the steps on the path from \((n-r,r)\) to \((0,0)\) along the Young diagram of \(\lambda\) with the numbers \(1,2,\dots , n\) in order. Then \(j_i\) is the label of the \(i\)th vertical step. The Schubert variety associated to \(\lambda\) and a complete flag \(V_\bullet\) of \(\CC^n\) is defined as 
\[
    X_\lambda(V_\bullet) = \left\{ x \in G(r,n) \mid \dim(L(x) \cap V_{j_i}) \geq i \right\}.
\]
Notice that if \(j_{i+1} = j_{i} + 1\) then
\[
    \dim(L(x) \cap V_{j_{i+1}}) \geq i+1 \implies \dim(L(x) \cap V_{j_i}) \geq i.
\]
So in this situation we may remove the \(i\)th defining inequality of \(X_\lambda(V_\bullet)\). The Chow ring $A^\bullet(G(r,n))$ is generated as a free abelian group by the Schubert cycles $\sigma_\lambda := [X_\lambda(V_\bullet)]$ where $\lambda$ ranges over all partitions $\lambda \subseteq r\times(n-r)$. Here $\sigma_\lambda$ is in the degree $|\lambda|$ piece of $A^\bullet(G(r,n))$ i.e.\ $\sigma_\lambda \in A^{|\lambda|}(G(r,n))$. The top piece $A^{r(n-r)}(G(r,n))$ is generated by the Schubert cycle $\sigma_{r\times(n-r)}$ and the isomorphism with $\Z$ sending $\sigma_{r\times(n-r)}$ to $1$ is called the degree map $\deg\colon A^{r(n-r)}(G(r,n)) \to \Z$.

The product in $A^\bullet(G(r,n))$ can be described purely by partition combinatorics known as Schubert calculus. There are two important formulas to do computations. First is the complimentary dimension formula which states that if $|\lambda| + |\mu| = r(n-r)$ then $\deg(\sigma_\lambda \sigma_\mu) = 1$ if $\mu = \lambda^c$ and $0$ otherwise. The other formula is called Pieri's formula. For a partition $\lambda \subseteq r\times(n-r)$ and a positive integer $b$ we have $\sigma_\lambda \sigma_{(b)} = \sum_{\mu \in \lambda \otimes b} \sigma_\mu$, where $\lambda \otimes b$ is the set of partitions in $r\times (n-r)$ that can be obtained from the Young diagram of $\lambda$ by adding $b$ boxes, at most one per column.  For a comprehensive reference see \cite{EisHar}.

\subsection{Matroids}
We mainly use the basis definition of a matroid, and identify a matroid $M$ of rank $r$ on $[n]$ with its set of bases $\mathcal{B} \subseteq \binom{[n]}{r}$. For references on matroid theory see \cite{oxleyBook}, \cite{Katz} and \cite{ardila}. We denote the number of connected components of $M$ by $\kappa(M)$.

To a point $x\in G(r,n)$ we associate a matroid $M_x$. The bases of $M_x$ are the sets $I \in \binom{[n]}{r}$ with non-zero Plücker coordinates $p_I(x) \neq 0$. A matroid is called representable over $\CC$ if it is obtained from some point in $G(r,n)$. We will be interested in a subclass of matroids called sparse paving matroids. A matroid $M$ of rank $r$ is called paving if all its circuits have cardinality $r$ or $r+1$. A matroid $M$ is called sparse paving if both $M$ and its dual $M^*$ are paving. It is conjectured in \cite{MAYHEW} and \cite{oxleyBook} that asymptotically almost all matroids are sparse paving.

We will also describe another class of matroids called lattice path matroids in order to make several other definitions easier; see \cite{BoninMierNoy} for comprehensive definitions. Let $P$ and $Q$ be north-east lattice paths from $(0,0)$ to $(n-r,r)$ such that $P$ never goes below $Q$. The lattice path matroid $M[P,Q]$ is a rank $r$ matroid on $[n]$ with bases in bijection with the lattice paths from $(0,0)$ to $(n-r,r)$ bounded by $P$ and $Q$. To get a basis $B$ of $M[P,Q]$ from a lattice path $S$, label the steps of $S$ by the integers $1,\dots, n$ from $(0,0)$ to $(n-r,r)$. Then the basis $B$ is the set of labels of vertical steps.

One special case of lattice path matroids are those where $P$ is the path starting with $r$ vertical steps followed by $n-r$ horizontal steps. In this case the matroids $M[P,Q]$ are called Schubert matroids\footnote{Schubert matroids are known under many names in the literature, such as nested matroids and generalized catalan matroids.}. If $I \in \binom{[n]}{r}$ is the set indicating the vertical steps of $Q$ we denote the Schubert matroid by $SM_I = M[P,Q]$. We introduce the matroids that will be most important to us in the following examples.
\begin{example}
    \label{ex:uniform}
    The Schubert matroid given by $I = \{n-r+1,n-r+2,\dots, n\}$ is the uniform matroid $SM_I = U_{r,n}$. To see this consider the example $r=3$ and $n = 7$. Since the paths $P$ and $Q$ form the $3$ by $4$ rectangle, the bases of $SM_I$ are in bijection with the lattice paths from $(0,0)$ to $(4,3)$. We see that any set $B \in \binom{[7]}{3}$ can be obtained as labels of vertical steps of some path.
\end{example}

\begin{example}
    \label{ex:minimal}
    The Schubert matroid given by $I = \{2,3,\dots,r-1,n \}$ is called the minimal matroid, and is denoted $T_{r,n} = SM_I$. It is, up to isomorphism, the unique connected matroid of rank $r$ on $[n]$ with minimal number of bases. In the example where $r=3$ and $n=7$ it is given by the diagram
    \begin{equation*}
        \scalebox{0.8}{\young(~~~~,~,~)}
    \end{equation*}
    and is represented by the columns of the matrix
    \begin{equation*}
        \left(\begin{array}{ccccccc}
            1 & 0 & 0 & 1 & 1 & 1 & 1\\
            0 & 1 & 0 & 1 & 1 & 1 & 1\\
            0 & 0 & 1 & 1 & 1 & 1 & 1\\
        \end{array}\right).
    \end{equation*}
    In general $T_{r,n}$ is obtained from $U_{r,r+1}$ by adding $n-r-1$ elements parallel to one of the elements in $U_{r,r+1}$.
\end{example}
\begin{example}
    \label{ex:panhandle}
    The Panhandle matroids were introduced in \cite{HMMMNVYPanhandle}. They are given by three positive integers $r \leq s < n$. Here we define them as the Schubert matroid $\Pan_{r,s,n} = SM_I$ where $I = \{s-r+2, s-r+3, \dots ,s, n\}$. The diagram of the panhandle matroid $\Pan_{4,6,10}$ is
    \begin{equation*}
        \scalebox{0.8}{\young(~~~~~~,~~~,~~~,~~~)}
    \end{equation*}
    which looks like a panhandle, hence the name. Notice that $\Pan_{r,r,n} = T_{r,n}$ and $\Pan_{r,n-1,n} = U_{r,n}$.
\end{example}

\subsection{Matroid polytopes and invariants}
In this section we follow the conventions of \cite{DerksenFink} and \cite{FinkSpeyer}. Throughout let $S(r,[n])$ denote the set of matroids of rank $r$ on $[n]$. We are interested in functions $f\colon S(r,[n]) \to A$ with values in an abelian group $A$. Such functions are called matroid invariants if $f(M_1)=f(M_2)$ whenever $M_1$ and $M_2$ are isomorphic. Also $f$ is called additive if $f(M) = 0$ whenever $M$ is not connected. Another important property of functions of matroids is called valuativity, which we will explain in what follows.

To any matroid $M$ on $[n]$ the matroid (base) polytope $P(M) \subset \R^n$ is defined as the convex hull of the indicator vectors $e_B = \sum_{i\in B}e_i$ for all bases $B$ of $M$. In \cite{Gel} it was shown that an equivalent definition of matroids can be given in terms of its matroid polytope, so the matroid polytope $P(M)$ contains all the information of $M$. The dimension of $P(M)$ is $n - \kappa(M)$. We will be interested in how we can chop up matroid polytopes into smaller matroid polytopes.
\begin{definition}
    A matroid subdivision of a matroid $M$ is a polyhedral complex consisting of matroid polytopes with facets $P_1, P_2, \dots P_k$ such that 
    \begin{equation*}
        P(M) = P_1 \cup P_2 \cup \dots \cup P_k.
    \end{equation*}
\end{definition}

Given such a subdivision, for any set $J \subseteq [k]$ we denote $P_J = \bigcap_{j\in J}P_j$ and $P_\emptyset = P(M)$.
We may identify the set $S(r,[n])$ with the set of matroid polytopes of matroids in $S(r,[n])$.

\begin{definition}
    A function $f\colon S(r,[n]) \to A$ is called valuative if for any $M \in S(r,[n])$ and any matroid subdivision of $M$ as above, the relation
    \begin{equation*}
        \sum_{J\subseteq [k]} (-1)^{|J|}f(P_J) = 0
    \end{equation*}
    holds.
\end{definition}
That is, $f$ is valuative if it satisfies an inclusion-exclusion property with respect to matroid subdivisions. Notice that if $f$ is both valuative and additive, since the matroid of $P_J$ is disconnected for all $J$ except $1,2, \dots , k$ and $\emptyset$, the formula simplifies to
\begin{equation*}
    f(M) = \sum_{i=1}^k f(P_i).
\end{equation*}
 In \cite{DerksenFink}, Derksen and Fink proved that a function $f\colon S(r,[n]) \to A$ is valuative if and only if it factors through the abelian group generated by indicator functions of matroid polytopes of $S(r,[n])$, denoted $P(r,[n])$. They also show that the quotient of $P(r,[n])$ given by identifying isomorphic matroids is generated as an abelian group by the classes of the indicator functions of Schubert matroids. This means that any valuative matroid invariant is determined by its values on the Schubert matroids. 

 The most obvious example of a valuative matroid invariant is the volume of a matroid polytope.
 \begin{example}
      Let $M$ be a matroid on $[n]$. We denote the normalized $s$ dimensional volume of $P(M)$ by $\Vol_s(P(M))$. Then
     \begin{equation*}
         \Vol_s(P(-))\colon S(r,[n]) \to \Z
     \end{equation*}
     is a valuative matroid invariant. Moreover, if $s = n-1$ it is an additive matroid invariant since $M$ is connected if and only if $n-\kappa(M) = s$. We may drop the subscript in $\Vol_s(P(M))$ if it does not cause too much confusion.
\end{example}
The next important example was introduced by Crapo in \cite{Crapo}.
\begin{example}
    The beta invariant is a function 
    \begin{equation*}
        \beta \colon S(r,[n]) \to \Z.
    \end{equation*}
    It is completely determined by the properties that if $i$ is neither a loop or coloop of $M$ then
    \begin{equation*}
        \beta(M) = \beta(M/i) + \beta(M\minus i).
    \end{equation*}
    And the base cases: \(\beta(U_{0,1}) = 0\), \(\beta(U_{1,1}) = 1\), and otherwise if $M$ contains a loop or coloop then $\beta(M) = 0$.
    It is well known that $\beta(M)$ is non-negative for all $M$ and $\beta(M) = 0$ if and only if $M$ is disconnected (excluding the special case $U_{0,1}$). The beta invariant is in fact also a valuative and additive matroid invariant.
\end{example}

The next example calculates the beta invariant of minimal matroids.
\begin{example}
    As we have seen, the minimal matroid $T_{r,n}$ is obtained from $U_{r,r+1}$ by adding $n-r-1$ elements parallel to one of the elements in $U_{r,r+1}$. Let $i$ be one of these parallel elements. Then $T_{r,n} \minus i = T_{r,n-1}$ and $T_{r,n}/i$ contains $n-r-1$ loops, so
    \begin{equation*}
        \beta(T_{r,n}) = \beta(T_{r,n}/i) + \beta(T_{r,n} \minus i) = \beta(T_{r,n-1}).
    \end{equation*}
    We may iterate this recursion until $T_{r,m}$ does not contain any parallel elements to choose, i.e.\ when $m-r-1 = 0$. Hence
    \begin{equation*}
        \beta(T_{r,n}) = \beta(T_{r,r+1}) = \beta(U_{r,r+1}).
    \end{equation*}
    The beta invariant of a uniform matroid is known to be
    \begin{equation*}
        \beta(U_{r,n}) = \binom{n-2}{r-1}
    \end{equation*}
    so
    \begin{equation*}
        \beta(T_{r,n}) = \beta(U_{r,r+1}) = \binom{r-1}{r-1} = 1.
    \end{equation*}
\end{example}

The following theorem characterises sparse paving matroids in terms of matroid subdivisions, and can be found for example in \cite{FerroniMinimal} or in \cite{JoswigSchröter}.
\begin{thm}
    A matroid $M \in S(r,[n])$ is sparse paving if and only if there exists a matroid subdivision of $U_{r,n}$ with facets $P(M)$ and $P(M_1), \dots , P(M_k)$ where all matroids $M_i$ are isomorphic to $T_{r,n}$.
\end{thm}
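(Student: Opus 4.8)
\emph{Proof sketch.} The plan is to exhibit the subdivision explicitly for the ``only if'' direction and to recognize any such subdivision for the ``if'' direction; I will take $M$ connected throughout, so that $P(M)$ is $(n-1)$-dimensional, like the hypersimplex $P(U_{r,n})$. The piece of geometric input I would set up first is an explicit half-space description of the polytope of the minimal matroid. Using the matrix representation of $T_{r,n}$ from \cref{ex:minimal}, whose first $r$ columns form an identity matrix and whose last $n-r$ columns are all equal, one reads off that the bases of this copy of $T_{r,n}$ are $[r]$ together with all sets $([r]\setminus\{a\})\cup\{b\}$ with $a\in[r]$ and $b>r$; so its vertices are exactly the $e_B$ of $P(U_{r,n})$ with $|B\cap[r]|\ge r-1$, and since intersecting $P(U_{r,n})$ with $\{\sum_{j=1}^r x_j\ge r-1\}$ merely truncates every edge out of $e_{[r]}$ at its far endpoint --- which already lies on the bounding hyperplane --- no new vertices appear, giving
\[
  P(T_{r,n}) \;=\; P(U_{r,n})\cap\Bigl\{x:\textstyle\sum_{j=1}^r x_j\ge r-1\Bigr\}.
\]
By a coordinate permutation it follows that for every $A\in\binom{[n]}{r}$ the polytope $T_A:=P(U_{r,n})\cap\{\textstyle\sum_{j\in A}x_j\ge r-1\}$ is the matroid polytope of a matroid isomorphic to $T_{r,n}$.

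For the ``only if'' direction, let $A_1,\dots,A_k$ be the non-bases of $M$; by the standard description of sparse paving matroids these are circuit-hyperplanes and $|A_i\cap A_j|\le r-2$ for $i\ne j$. Since the facet inequalities of a connected matroid polytope beyond those of its hypersimplex come from its flacets, and for $M$ the flacets are precisely the circuit-hyperplanes, one gets $P(M)=P(U_{r,n})\cap\bigcap_i\{\textstyle\sum_{j\in A_i}x_j\le r-1\}$. Hence every point of $P(U_{r,n})$ lies in $P(M)$ or in some $T_{A_i}$, so these $k+1$ polytopes cover $P(U_{r,n})$; and everything else reduces to the inequality
\[
  \textstyle\sum_{j\in A_i}x_j+\sum_{j\in A_j}x_j=\sum_{j\in A_i\cap A_j}x_j+\sum_{j\in A_i\cup A_j}x_j\le|A_i\cap A_j|+r\le 2r-2
\]
valid on $P(U_{r,n})$, which shows that no point lies in the interiors of two distinct cells $T_{A_i}$, and that where two cells meet the relevant inequalities are all equalities, exhibiting the intersection as a common face of both (the common face of $P(M)$ and $T_{A_i}$ being the facet $\{\textstyle\sum_{j\in A_i}x_j=r-1\}$ of $T_{A_i}$). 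That makes this a matroid subdivision of the required shape.

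For the ``if'' direction, suppose $P(U_{r,n})=P(M)\cup P(M_1)\cup\dots\cup P(M_k)$ is a matroid subdivision into maximal cells with each $M_i\cong T_{r,n}$. Then $P(M)$ is $(n-1)$-dimensional so $M$ is connected, and an isomorphism $T_{r,n}\cong M_i$ is a relabelling of $[n]$, so $P(M_i)=T_{A_i}$ for some $A_i\in\binom{[n]}{r}$. Disjointness of interiors forces the relative interior of $P(M)$ to avoid every $\{\textstyle\sum_{j\in A_i}x_j>r-1\}$, hence $P(M)\subseteq P(U_{r,n})\cap\bigcap_i\{\textstyle\sum_{j\in A_i}x_j\le r-1\}$; and any point of $P(U_{r,n})$ with all $\textstyle\sum_{j\in A_i}x_j<r-1$ lies in no $T_{A_i}$ hence in $P(M)$, so taking closures gives equality. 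Since $P(M)$ is a matroid polytope its vertices are exactly the $e_B$ with $B\notin\{A_1,\dots,A_k\}$, so $M$ is $U_{r,n}$ with $A_1,\dots,A_k$ deleted from the set of bases. It remains to check $|A_i\cap A_j|\le r-2$ for $i\ne j$: if instead $|A_i\cap A_j|=r-1$ then for $n\ge r+2$ a short computation produces a point in the interiors of both $T_{A_i}$ and $T_{A_j}$, and for $n=r+1$ the matroid $M$ acquires a coloop and is disconnected --- either way a contradiction. So $M$ and $M^*$ are paving, i.e.\ $M$ is sparse paving.

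The part I expect to be the main obstacle is the two half-space descriptions above --- in particular the ``no extra vertices on the cutting hyperplanes'' claim, which is exactly where the hypothesis $|A_i\cap A_j|\le r-2$ is used and fails for general paving matroids --- together with the exclusion of $|A_i\cap A_j|=r-1$ in the converse, which genuinely uses both that $M$ is a matroid and that $P(M)$ is full-dimensional. One could instead deduce the theorem from the machinery of hypersimplex splits in \cite{JoswigSchröter} or from Ferroni's analysis of minimal matroids in \cite{FerroniMinimal}.
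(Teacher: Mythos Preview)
The paper does not give its own proof of this theorem: it is quoted as a known characterisation, with a pointer to \cite{FerroniMinimal} and \cite{JoswigSchröter}. So there is no in-paper argument to compare against; the relevant comparison is to those references, and you in fact name both of them in your final paragraph as alternative routes.

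Your direct argument is sound. The half-space description $T_A = P(U_{r,n})\cap\{\sum_{j\in A}x_j\ge r-1\}$ is correct, and the crucial fact that this hyperplane does not cut any edge of the hypersimplex in its interior is exactly the ``split'' property from \cite{JoswigSchröter}; your edge-by-edge justification is a bare-hands version of that. For the forward direction your use of the flacet description of $P(M)$ for a connected sparse paving matroid, together with the inequality $\sum_{j\in A_i}x_j+\sum_{j\in A_j}x_j\le |A_i\cap A_j|+r\le 2r-2$, cleanly handles both the covering and the face-compatibility checks; this is essentially the argument one extracts from \cite{FerroniMinimal}. The converse is also fine: once $P(M)$ is known to be a matroid polytope, the half-space description you derive forces the bases to be $\binom{[n]}{r}\setminus\{A_1,\dots,A_k\}$, and your exclusion of $|A_i\cap A_j|=r-1$ works (for $n\ge r+2$ one can take, for instance, $x_c=1-\epsilon$ on $A_i\cap A_j$, $x_a=x_b=\tfrac12$ on the two distinguished elements, and spread the remaining mass to keep all coordinates in $(0,1)$; the case $n=r+1$ is degenerate since then $T_{r,n}=U_{r,n}$).

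One caveat worth flagging explicitly: you assume $M$ connected ``throughout'', and indeed the theorem as literally stated is only correct under that hypothesis. A disconnected sparse paving matroid such as $U_{1,2}\oplus U_{1,2}$ has a polytope of dimension $n-2$, which cannot appear as a maximal cell of a subdivision of the $(n-1)$-dimensional hypersimplex. The paper only ever invokes the theorem for connected $M$ (in the proof of \cref{thm:1}), so this is harmless there, but it is a genuine restriction on the statement and your proof is right to impose it.
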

Hence for any additive and valuative matroid invariant $f$ and sparse paving matroid $M$ of rank $r$ on $[n]$ we have
\begin{align*}
    f(U_{r,n}) &= f(M) + k\cdot f(T_{r,n})\\
    f(M) &= f(U_{r,n}) - k\cdot f(T_{r,n}).
\end{align*}

\section{Schubert coefficients}

In this section we define Schubert coefficients of matroids and recap what is previously known. For each point $x\in G(r,n)$ we may consider the class of the torus orbit closure $\T$ in $A^\bullet(G(r,n))$. We express this class in the basis of Schubert cycles as
\begin{equation}
    \label{eq:TClass}
    [\T] = \sum_{\lambda} d_\lambda \sigma_\lambda,
\end{equation}
where the sum is over all partitions $\lambda \subseteq r\times(n-r)$ with $|\lambda| = \codim(\T)$ and $d_\lambda \in \Z$. More precisely the coefficient $d_\lambda = \deg([\T] \sigma_{\lambda^c})$ is the number of points in the intersection of $\T$ and the Schubert variety $X_{\lambda^c}(V_\bullet)$ for a generic flag $V_\bullet$. Hence the coefficients are non-negative.

In \cite{Gel} it is shown that $\T$ is isomorphic to the projective toric variety of the matroid polytope $P(M_x)$. Hence many properties of $\T$ can be read off from the matroid $M_x$. For example
\begin{equation}
    \dim(\T) = \dim(P(M_x)) = n - \kappa(M_x)
\end{equation}
and
\begin{equation}
    \deg(\T) = \Vol(P(M_x)).
\end{equation}
Here $\deg(\T)$ denotes the degree of $\T$ embedded in $\PP^{\binom{n}{r}-1}$ and $\Vol(P(M_x))$ is the normalized $n-\kappa(M_x)$ dimensional volume of $P(M_x)$.

As we will see, the class of $\T$ in $A^\bullet(G(r,n))$ only depends on the matroid $M_x$, so we may denote the coefficients in \cref{eq:TClass} by $d_\lambda(M_x)$. The coefficient $d_\lambda(M_x)$ only makes sense if $|\lambda| = r(n-r)-(n-\kappa(M_x))$. For all other cases we define $d_\lambda(M_x) = 0$. This way we can consider $d_\lambda$ as functions from the set of matroids representable over $\CC$ of rank $r$ on $[n]$ to $\Z$. We call the integers $d_\lambda(M_x)$ the Schubert coefficients of the matroid $M_x$.

In \cite{Klyachko}, Klyachko gives a formula for $d_\lambda(U_{r,n})$ in terms of evaluations of Schur polynomials.

\begin{thm}\textnormal{\cite[Theorem 6]{Klyachko}}
    \label{thm:Klyachko}
    Let $\lambda \subseteq r \times (n-r)$ be a partition with $|\lambda| = (r-1)(n-r-1)$. The associated Schubert coefficient of $U_{r,n}$ is
    \begin{equation*}
        d_\lambda(U_{r,n}) = \sum_{i=0}^r (-1)^i\binom{n}{i}s_{\lambda^c}(1^{r-i}).
        \qedhere
    \end{equation*}
\end{thm}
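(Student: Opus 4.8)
The plan is to turn $d_\lambda(U_{r,n})$ into a genuine intersection number on a smooth toric resolution of the generic torus orbit closure, and then to evaluate that number by equivariant localization.

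\textbf{Reduction to an intersection number.} Fix a generic $x \in G(r,n)$, so $M_x = U_{r,n}$, and write $Y = \T$ and $L = L(x)$. Then $\dim Y = n - \kappa(U_{r,n}) = n-1$, and since $|\lambda| = (r-1)(n-r-1)$ one computes $|\lambda^c| = r(n-r) - (r-1)(n-r-1) = n-1$, so $\sigma_{\lambda^c}$ has the codimension complementary to $\dim Y$. Hence $d_\lambda(U_{r,n}) = \deg([Y]\,\sigma_{\lambda^c}) = \int_Y \sigma_{\lambda^c}|_Y$ is the number of points in the transverse, $0$-dimensional intersection $Y \cap X_{\lambda^c}(V_\bullet)$ for a generic complete flag $V_\bullet$. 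In particular $d_\lambda(U_{r,n}) \ge 0$ is automatic; the content of the theorem is the \emph{closed form}, whose non-negativity is not visible term by term.

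\textbf{Resolve and localize.} By \cite{Gel} the variety $Y$ is the projective toric variety of the hypersimplex $\Delta_{r,n} = P(U_{r,n})$, which is a generalized permutohedron, so its normal fan is coarsened by the braid fan. Hence the smooth projective permutohedral variety $X_n$ carries a proper birational toric morphism $X_n \to Y$, and composing with the closed embedding $Y \hookrightarrow G(r,n)$ gives $\pi\colon X_n \to G(r,n)$ with $\pi_*[X_n] = [Y]$. By the projection formula $d_\lambda(U_{r,n}) = \int_{X_n}\pi^*\sigma_{\lambda^c}$, a top-degree class on $X_n$, which I would evaluate by Atiyah--Bott localization. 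The $T$-fixed points of $X_n$ are the total orders of $[n]$; such an order $\sigma$ is sent by $\pi$ to the $T$-fixed point $p_B$ of $G(r,n)$ indexed by its set $B$ of $r$ largest elements, and there the equivariant restriction of $\pi^*\sigma_{\lambda^c}$ equals the Schur polynomial $s_{\lambda^c}$ in the $r$ torus characters $\{-\chi_i : i \in B\}$, while the equivariant Euler class of $T_\sigma X_n$ is $\prod_{k=1}^{n-1}(\chi_{\sigma(k+1)} - \chi_{\sigma(k)})$. Summing over all $n!$ orders and collecting the $r!\,(n-r)!$ orders with a common $B$ rewrites $d_\lambda(U_{r,n})$ as $\sum_{B} s_{\lambda^c}(\{-\chi_i : i \in B\})$ times an explicit rational function of the $\chi_i$.

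\textbf{The combinatorial collapse.} This expression is a priori a rational function of the equivariant parameters $\chi_1,\dots,\chi_n$, yet it is forced to equal the integer $d_\lambda(U_{r,n})$; the real work is to extract that integer in the stated shape. I would do so by a careful specialization of the parameters — for instance perturbing the $\chi_i$ to $\chi + \varepsilon\, i$ and letting $\varepsilon \to 0$, or sending $\chi_i \to 0$ for $i$ in a chosen subset while invoking the vanishing identity $\sum_{\sigma \in S_m}\prod_{k=1}^{m-1}(\chi_{\sigma(k)} - \chi_{\sigma(k+1)})^{-1} = 0$ valid for $m \ge 2$ — which collapses the sum over $B$: the binomial $\binom ni$ records the choice of the $i$-element set of coordinates that degenerate, and $s_{\lambda^c}(1^{r-i})$ is the Schubert number surviving on the remaining $r-i$ coordinate directions, the signs assembling into the claimed alternating sum. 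Controlling this cancellation so that all parameter-dependence disappears is the main obstacle. A parallel route avoids localization: realize $Y$ through a generic $r \times n$ matrix, describe the torus-orbit stratification of $Y$ by coordinate-subspace configurations, and count incidences with the generic flag by inclusion--exclusion over which coordinate directions are absorbed into lower strata, the interior count in rank $r-i$ again being $s_{\lambda^c}(1^{r-i})$. As a check, the formula must give $d_\emptyset(U_{1,n}) = d_\emptyset(U_{r,r+1}) = 1$, matching the fact that then $\T$ equals all of $\PP^{n-1}$, respectively all of $G(r,r+1)$.
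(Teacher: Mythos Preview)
The paper does not supply its own proof of this statement: it is quoted as \cite[Theorem~6]{Klyachko} and used as a black box, so there is nothing in the paper to compare your argument against.

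As for the proposal itself, the set-up is sound --- pulling $\sigma_{\lambda^c}$ back along the birational map from the permutohedral variety $X_n$ to the generic torus orbit closure and applying Atiyah--Bott localization is a legitimate route, and is close in spirit both to Klyachko's original character computation and to the tautological-class viewpoint of \cite{BEST} that the paper mentions. But what you have written is an outline rather than a proof. You explicitly flag the ``combinatorial collapse'' that is supposed to turn the localization sum into $\sum_{i=0}^r(-1)^i\binom{n}{i}s_{\lambda^c}(1^{r-i})$ as ``the main obstacle,'' and neither of your two sketched mechanisms is actually carried out: the specialization $\chi_i \to \chi + \varepsilon i$ is only asserted to produce the binomials, and in the inclusion--exclusion picture the claim that the ``interior count in rank $r-i$'' equals $s_{\lambda^c}(1^{r-i})$ is precisely the content of the formula, not an input to it. So nothing you write is wrong, but the step that constitutes the theorem is still missing.
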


In \cite{Speyer}, Speyer proved the following important theorem describing the Schubert coefficient associated to the hook complement.
\begin{thm}\textnormal{\cite[Theorem 5.1]{Speyer}}
    \label{thm:Speyer}
    For $x \in G(r,n)$
    \begin{equation*}
        d_{h^c}(M_x) = \beta(M_x).
    \end{equation*}
\end{thm}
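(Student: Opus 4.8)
The plan is to turn the statement into an intersection number on $G(r,n)$ and then match it with the beta invariant using that both are valuative matroid invariants, which reduces everything to Schubert matroids.

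Write $M=M_x$. First, both sides vanish unless $M$ is connected: if $\kappa(M)\ge 2$ then $\dim\T=n-\kappa(M)\le n-2$, so $\codim\T\ge(r-1)(n-r-1)+1>|h^c|$ and hence $d_{h^c}(M)=0$, while $\beta(M)=0$ as well. So assume $M$ connected; then $\codim\T=(r-1)(n-r-1)=|h^c|$, so by the complementary dimension formula together with $(h^c)^c=h$,
\[
  d_{h^c}(M)=\deg\!\bigl([\T]\cdot\sigma_h\bigr)=\#\bigl(\T\cap X_h(V_\bullet)\bigr)
\]
for a generic complete flag $V_\bullet$. Unwinding the jumping sequence of $h=(n-r,1,\dots,1)$ gives $X_h(V_\bullet)=\{L\mid V_1\subseteq L\subseteq V_{n-1}\}\cong G(r-1,n-2)$; equivalently, since $\sigma_{(n-r+1,1^{r-2})}$ vanishes in $A^\bullet(G(r,n))$, Pieri yields $\sigma_h=\sigma_{(n-r)}\cdot\sigma_{(1^{r-1})}$ with $X_{(n-r)}(V_\bullet)=\{L\mid V_1\subseteq L\}$ and $X_{(1^{r-1})}(V_\bullet)=\{L\mid\dim(L\cap V_{n-2})\ge r-1\}$. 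So $d_{h^c}(M)$ is the number of $r$-planes of $\T$ passing through a generic line and contained in a generic hyperplane through that line.

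To identify this count with $\beta(M)$, note that each $d_\lambda$ is a valuative matroid invariant, being a coefficient of the valuative invariant $\Sc$; the same is true of $\beta$. Since valuative invariants are determined by their values on Schubert matroids, it suffices to prove $d_{h^c}(SM_I)=\beta(SM_I)$ for every $SM_I$. When $SM_I$ is disconnected both sides vanish — for $d_{h^c}$ by the dimension count applied to the $0/1$ realization of $SM_I$ — so only connected $SM_I$ remain. There $\beta(SM_I)$ is a combinatorial quantity read off the Tutte polynomial of the lattice path matroid $SM_I=M[P,Q]$ (positive precisely when $SM_I$ is connected), while $d_{h^c}(SM_I)=\#(\overline{T x_I}\cap gX_h)$ can be computed from the explicit realization $x_I$: $\overline{T x_I}$ is a concrete projective toric variety, and its intersection with a generic translate of $X_h$ is obtained either by direct parametrization or tropically from the matroid polytope $P(SM_I)$. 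One then checks the two quantities agree. An alternative, avoiding Schubert matroids, is to verify that $d_{h^c}$ obeys the recursion defining $\beta$: for $i$ neither a loop nor a coloop, $d_{h^c}(M)=d_{h^c}(M/i)+d_{h^c}(M\minus i)$, with the same base cases and with $d_{h^c}(N)=0$ when $N$ has a loop or a coloop; geometrically $M/i$ is realized by $L(x)\cap\{x_i=0\}$ inside $G(r-1,[n]\setminus i)$ and $M\minus i$ by the projection of $L(x)$ along $e_i$ inside $G(r,[n]\setminus i)$, and one degenerates $V_{n-1}$ toward the coordinate hyperplane $\{x_i=0\}$ (resp.\ $V_1$ toward $\langle e_i\rangle$) to split the point count $\#(\T\cap X_h)$ into the deletion and contraction contributions.

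The main obstacle is the evaluation at the end of either route. On the valuativity route it is the computation of $d_{h^c}$ on connected Schubert matroids — i.e.\ understanding exactly how the torus orbit closure of a Schubert matroid meets a generic $X_h$. On the recursion route it is the transversality and excess-intersection bookkeeping needed to show that the degenerated intersection $\T\cap X_h$ breaks up cleanly, with all multiplicities equal to $1$, into the pieces corresponding to $M/i$ and $M\minus i$. Everything before that step — the reduction to connected matroids, the passage to an intersection number, and the formal appeals to valuativity — is routine.
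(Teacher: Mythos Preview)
The paper does not contain a proof of this theorem: it is quoted verbatim from \cite{Speyer} with a citation and is used as a black box thereafter. So there is no ``paper's own proof'' to compare against.

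As for your proposal on its own terms: it is not a proof but a plan, and you say so yourself. The preliminary reductions are fine --- vanishing on disconnected matroids, rewriting $d_{h^c}(M)=\deg([\T]\cdot\sigma_h)$, the identification of $X_h(V_\bullet)=\{L\mid V_1\subseteq L\subseteq V_{n-1}\}$, and the factorization $\sigma_h=\sigma_{(n-r)}\sigma_{(1^{r-1})}$ are all correct and routine. But both routes you sketch stop exactly where the content begins. On the valuativity route, ``compute $d_{h^c}(SM_I)$ for every connected Schubert matroid and check it equals $\beta(SM_I)$'' is not visibly easier than the original statement; you give no mechanism for carrying this out. On the recursion route, the entire substance of the theorem lies in the degeneration and transversality analysis you flag as ``the main obstacle'': one must show that as the flag specializes, $\T\cap X_h$ breaks into reduced pieces matching $\overline{T(x/i)}\cap X_{h'}$ and $\overline{T(x\!\setminus\! i)}\cap X_{h''}$ with no excess, and this is nontrivial (Speyer's actual argument in \cite{Speyer} goes through tropical geometry precisely to control this).

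A minor caution on the first route: you invoke valuativity of $d_\lambda$, which in this paper is \cref{thm:BergetFink}, itself imported from \cite{FinkSpeyer} and \cite{BergetFink}. That is not circular, but it does mean your ``proof'' of \cref{thm:Speyer} would rest on machinery developed after and partly motivated by \cref{thm:Speyer}; you should be explicit about that dependency if you pursue this line.
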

Notice that this is true even if $M_x$ is disconnected, since in that case both $d_{h^c}(M_x)$ and $\beta(M_x)$ are zero.

One issue with the Schubert coefficients of matroids is that points in the Grassmannian only give rise to matroids representable over $\CC$. This is rectified by Fink and Speyer in \cite{FinkSpeyer} where they define the class of any matroid \(M\) in the \(K\)-theory of the Grassmannian, which can be specialized to a class in the Chow ring. We denote this class \(\Sc(M) \in A^\bullet(G(r,n))\). If \(M\) is represented by a point \(x \in G(r,n)\), then \(\Sc(M) = [\T]\). The specialization from \(K\)-theory to the Chow ring is made explicit by Berget and Fink in \cite{BergetFink}. Note that $\Sc(M) \in A^{r(n-r)-(n-\kappa(M))}(G(r,n))$. From the class $\Sc(M)$ we get Schubert coefficients $d_\lambda(M)$ for arbitrary matroids. We summarize the most relevant takeaways from \cite{FinkSpeyer} and \cite{BergetFink} in a theorem.
\begin{thm}
\label{thm:BergetFink}
    For each partition $\lambda \subseteq r\times (n-r)$ there are functions $d_\lambda \colon S(r,[n]) \to \Z$ coinciding with the coefficients of \cref{eq:TClass} whenever $M = M_x$ for $x\in G(r,n)$. These functions are valuative matroid invariants and whenever $|\lambda| = r(n-r)-(n-1)$ then $d_\lambda$ is additive.
\end{thm}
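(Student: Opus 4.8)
The plan is to assemble the statement from the constructions of \cite{FinkSpeyer} and \cite{BergetFink}, checking each clause in turn; beyond quoting their results in the right form, no new ideas are needed. Recall the input. To every $M\in S(r,[n])$ the paper \cite{FinkSpeyer} attaches a class $[M]_{K}\in K(G(r,n))$ in the $K$-theory of the Grassmannian with three features we use: (i) $[M_{x}]_{K}=[\mathcal{O}_{\T}]$ for every $x\in G(r,n)$; (ii) $M\mapsto [M]_{K}$ is valuative, i.e.\ it satisfies the inclusion--exclusion relations of matroid subdivisions, equivalently it extends to a group homomorphism on $P(r,[n])$; and (iii) the construction commutes with the permutation action of $S_{n}$ on the ground set $[n]$ and on $G(r,n)$. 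In \cite{BergetFink} this $K$-class is carried to the Chow ring by a group homomorphism $\mathrm{gr}\colon K(G(r,n))\to A^{\bullet}(G(r,n))$, the Chow specialization, which is $S_{n}$-equivariant, sends $[\mathcal{O}_{\T}]$ to $[\T]$, and places $[M]_{K}$ in the single graded piece $A^{r(n-r)-(n-\kappa(M))}(G(r,n))$. Writing $\Sc(M):=\mathrm{gr}([M]_{K})=\sum_{\lambda}d_{\lambda}(M)\sigma_{\lambda}$ in the Schubert basis, and setting $d_{\lambda}(M):=0$ whenever $|\lambda|\neq r(n-r)-(n-\kappa(M))$, defines the functions $d_{\lambda}\colon S(r,[n])\to\Z$.

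Agreement with \cref{eq:TClass} is then immediate: for $M=M_{x}$, (i) and $\mathrm{gr}([\mathcal{O}_{\T}])=[\T]$ give $\Sc(M_{x})=[\T]$, so the integers $d_{\lambda}(M_{x})$ are exactly the coefficients of \cref{eq:TClass}. Valuativity follows by composition. By (ii), $M\mapsto[M]_{K}$ factors through $P(r,[n])$; post-composing with the group homomorphism $\mathrm{gr}$ and then with the $\Z$-linear functional on $A^{\bullet}(G(r,n))$ extracting the coefficient of $\sigma_{\lambda}$ gives a group homomorphism $P(r,[n])\to\Z$, which by the Derksen--Fink criterion is exactly the assertion that $d_{\lambda}$ is valuative. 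For matroid invariance, the key observation is that the permutation action of $S_{n}$ on $A^{\bullet}(G(r,n))$ is trivial, because $S_{n}\subset GL_{n}$ and the connected group $GL_{n}$ acts trivially on the Chow ring of $G(r,n)$; together with the $S_{n}$-equivariance in (iii) and of $\mathrm{gr}$ this gives $\Sc(\pi M)=\pi_{*}\Sc(M)=\Sc(M)$ for all $\pi\in S_{n}$, hence $d_{\lambda}(\pi M)=d_{\lambda}(M)$ and $d_{\lambda}$ is a matroid invariant.

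Finally, additivity when $|\lambda|=r(n-r)-(n-1)$. If $M$ is disconnected then $\kappa(M)\geq 2$, so $\Sc(M)\in A^{r(n-r)-(n-\kappa(M))}(G(r,n))$ with $r(n-r)-(n-\kappa(M))\geq r(n-r)-(n-2)>r(n-r)-(n-1)$; since $\sigma_{\lambda}\in A^{|\lambda|}(G(r,n))$, every coefficient $d_{\lambda}(M)$ with $|\lambda|=r(n-r)-(n-1)$ vanishes, which is precisely the additivity condition $f(M)=0$ for disconnected $M$.

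The only genuinely substantial point is the construction of the Chow specialization $\mathrm{gr}$ together with its stated properties (being a group homomorphism, $S_{n}$-equivariance, behavior on structure sheaves of torus orbit closures, and the graded piece it lands in); this is the content of \cite{FinkSpeyer} and \cite{BergetFink}, and in our treatment it is invoked rather than reproved. The remaining steps are formal, relying only on the preservation of Derksen--Fink valuativity under post-composition with group homomorphisms and on the triviality of connected-group actions on Chow rings.
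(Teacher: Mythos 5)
Your proposal is correct and follows the same route as the paper, which states this theorem purely as a summary of \cite{FinkSpeyer} and \cite{BergetFink} without further proof: the substantial content (the $K$-class, its valuativity and equivariance, and the Chow specialization landing in degree $r(n-r)-(n-\kappa(M))$) is quoted from those references, exactly as the paper intends. Your additional formal checks — valuativity via the Derksen--Fink factorization, invariance via triviality of the $GL_n$-action on $A^\bullet(G(r,n))$, and additivity from the degree convention when $\kappa(M)\geq 2$ — are all sound and simply make explicit what the paper leaves implicit.
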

Notice that since valuative matroid invariants are determined by their values on Schubert matroids, which are representable, it follows from \cref{thm:Speyer} and \cref{thm:BergetFink} that \(d_{h^c}(M) = \beta(M)\) for all matroids \(M\).

As mentioned in \cite{BergetFink} there is another way of defining Schubert coefficients of arbitrary matroids by the theory of tautological classes of matroids from \cite{BEST}. In this setting $d_\lambda(M) = \deg_{X_n}(s_{\lambda^c}(\mathcal{S}_M^\vee))$ where $X_n$ is the $(n-1)$-dimensional permutahedral variety, $\mathcal{S}_M$ is the tautological sub-bundle of $M$ and $s_{\lambda^c}(\mathcal{S}_M^\vee)$ is the evaluation of the Schur polynomial $s_{\lambda^c}$ at the Chern roots of $\mathcal{S}_M^\vee$.

For representable matroids $M_x$ we could conclude that $d_\lambda(M_x)$ is non-negative since it is the number of points in an intersection of varieties. This argument only works for representable matroids, but the non-negativity of $d_\lambda(M)$ for arbitrary matroids $M$ is conjectured in \cite{BergetFink}.
\begin{conj}\textnormal{\cite[Conjecture 9.13]{BergetFink}}
    \label{conj:positivity}
    The Schubert coefficients $d_\lambda(M)$ are non-negative for all matroids $M$.
\end{conj}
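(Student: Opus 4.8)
The plan is to reduce \cref{conj:positivity} to connected matroids and then to produce a manifestly non-negative description of $d_\lambda(M)$ for connected $M$, which I would pursue both combinatorially and geometrically. \textbf{Reduction to the connected case.} \cref{thm:disconnected} expresses the Schubert coefficients of $M = M_1 \oplus \cdots \oplus M_t$ in terms of the $d_\mu(M_i)$; as with the volume of a matroid polytope, that formula is a sum of products of the component coefficients weighted by non-negative, multinomial-type factors recording how the Young diagrams of the pieces interleave inside the $r\times(n-r)$ rectangle, so non-negativity for connected matroids propagates to all matroids. Hence it suffices to treat connected $M$, for which $\T$ has dimension $n-1$ and therefore $d_\lambda(M) = 0$ unless $|\lambda| = r(n-r)-(n-1) = |h^c|$: the potentially non-zero coefficients of a connected matroid all lie in the single degree of $d_{h^c}$, with complementary partitions satisfying $|\lambda^c| = n-1$.

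\textbf{A combinatorial route.} By \cref{thm:BergetFink} each $d_\lambda$ is a valuative matroid invariant, hence determined by its values on Schubert matroids; since Schubert matroids are representable, $d_\lambda(SM_I) = \deg([\T]\,\sigma_{\lambda^c}) \ge 0$ is automatic for every $I$ by the point count recalled after \cref{eq:TClass}. The difficulty is that valuativity is an inclusion--exclusion relation carrying signs, so non-negativity on the generating Schubert matroids does not propagate on its own: expanding $[P(M)]$ as an integer combination $\sum_I c_I[P(SM_I)]$ of Schubert-matroid classes (Derksen--Fink), the $c_I$ need not be non-negative. The proposal is to build a positive combinatorial model --- a finite set $\mathcal{A}_\lambda(M)$ of tableau-like objects attached to the bases of $M$, which on a Schubert matroid $SM_I$ bijects with the standard Young tableaux of shape $\lambda^c$ satisfying the flag condition computing $d_\lambda(SM_I)$ (for $\lambda = h^c$ this should reproduce the $\binom{n-2}{r-1} = \beta(U_{r,n})$ standard hook tableaux) --- such that $M \mapsto |\mathcal{A}_\lambda(M)|$ is valuative. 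Then $d_\lambda(M) = |\mathcal{A}_\lambda(M)| \ge 0$ for all $M$, and the computations of $d_\lambda$ for $U_{r,n}$, $T_{r,n}$, $\Pan_{r,s,n}$ and the sparse paving matroids of \cref{thm:1} furnish guidance and test cases.

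\textbf{A geometric route.} Alternatively, use the tautological description $d_\lambda(M) = \deg_{X_n}\!\big(s_{\lambda^c}(\mathcal{S}_M^\vee)\big)$ of \cite{BEST}, with $|\lambda^c| = n-1 = \dim X_n$. For representable $M$ this coincides with the point count, and positivity is visible bundle-theoretically: there $\mathcal{S}_M^\vee$ is globally generated (a quotient of the trivial bundle $\mathcal{O}^{\,n}$), hence nef, so the Fulton--Lazarsfeld positivity theorem --- every Schur polynomial in the Chern classes of a nef bundle pairs non-negatively with effective cycles --- gives $\deg_{X_n}(s_{\lambda^c}(\mathcal{S}_M^\vee)) \ge 0$. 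The proposal is to determine for which matroids the tautological bundle $\mathcal{S}_M^\vee$ (no longer globally generated in general) remains nef, prove the conjecture in that range directly from the $K$-theoretic construction of \cite{FinkSpeyer}, \cite{BEST}, and --- where nefness fails --- replace it by the weaker numerical positivity $\deg_{X_n}(s_{\lambda^c}(\mathcal{S}_M^\vee)) \ge 0$ tailored to the specific shapes $\lambda^c \subseteq r\times(n-r)$ that arise (those with $|\lambda^c| = n-1$), perhaps via a flag or degeneration argument on the permutahedral variety $X_n$.

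\textbf{Main obstacle and supporting evidence.} In both routes the obstruction is the same: transporting positivity off the generating Schubert matroids past the signs --- in the combinatorial route, proving that the signed count $\sum_{J}(-1)^{|J|}|\mathcal{A}_\lambda(P_J)|$ vanishes over every matroid subdivision, presumably via a subdivision-compatible bijection; in the geometric route, controlling the sign pattern of the $K$-theoretic construction well enough to recover (a substitute for) nefness for non-representable matroids. I expect the combinatorial model to be the more tractable path, and I would not be surprised if a genuinely new idea is needed. As evidence that the conjecture holds, it is known for representable matroids (in particular for all Schubert matroids), for $U_{r,n}$ via \cref{thm:Klyachko}, for connected sparse paving matroids by \cref{cor:1}, and --- for the single coefficient $d_{h^c}$ --- for \emph{all} matroids, since $d_{h^c}(M) = \beta(M) \ge 0$ by \cref{thm:Speyer} and \cref{thm:BergetFink}.
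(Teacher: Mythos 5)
There is a genuine gap here, and it is worth being explicit about its nature: the statement you are addressing is \cref{conj:positivity}, which is an \emph{open conjecture} (due to Berget and Fink) that this paper does not prove; the paper only supplies partial evidence, namely non-negativity for connected sparse paving matroids (\cref{thm:1}, \cref{cor:1}), the reduction to connected matroids via \cref{thm:disconnected} and \cref{cor:disconnected}, and the single coefficient $d_{h^c}(M)=\beta(M)\geq 0$ coming from \cref{thm:Speyer} and \cref{thm:BergetFink}. Your text is a research programme rather than a proof, and you say so yourself; so it cannot be accepted as a proof of the statement. The parts of your reduction that are actually carried out are correct and coincide with the paper: the direct-sum formula of \cref{cor:disconnected} has non-negative Littlewood--Richardson coefficients, so it suffices to treat connected matroids, and valuativity (\cref{thm:BergetFink}) plus representability of Schubert matroids gives non-negativity on the generators. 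But, as you note, the Derksen--Fink expansion of an arbitrary matroid in Schubert matroids has signs, so nothing propagates.

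The two proposed routes each leave the essential difficulty untouched. In the combinatorial route, the existence of a set $\mathcal{A}_\lambda(M)$ whose cardinality is valuative and agrees with $d_\lambda$ on Schubert matroids is not a reduction of the problem: such a set exists if and only if $d_\lambda(M)\geq 0$ for all $M$, so postulating it is a restatement of \cref{conj:positivity}, and no candidate construction or subdivision-compatible bijection is given (even reproducing the known values for $U_{r,n}$ from \cref{thm:Klyachko} or for sparse paving matroids from \cref{thm:1} would only be consistency checks, not a construction). In the geometric route, the appeal to global generation, nefness and Fulton--Lazarsfeld positivity of $\mathcal{S}_M^\vee$ is only available when $M$ is representable, where non-negativity is already known from the point count after \cref{eq:TClass}; for a non-representable matroid the tautological ``bundles'' of \cite{BEST} are classes in $K$-theory and not honest vector bundles on $X_n$, so nefness is not even defined and there is no positivity theorem to invoke --- this is precisely why the conjecture is open. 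Your closing assessment (that a genuinely new idea is needed) is accurate, but it also confirms that what you have written is a plan with the central step missing, not a proof, and it should not be presented as one.
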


\section{Schubert coefficients of disconnected matroids}

In this section we prove the following result, expressing the Schubert coefficients of a disconnected matroid in terms of the Schubert coefficients of its connected components.
\begin{thm}
    \label{thm:disconnected}
    Let $M_i$ be a matroid of rank $r_i$ on $[n_i]$ for $i = 1, 2$. The class of $M_1\oplus M_2$ in $A^\bullet(G(r_1+r_2,n_1+n_2))$ is
    \begin{equation*}
        \Sc(M_1\oplus M_2) = (\square \Sc(M_1))(\square \Sc(M_2))
    \end{equation*}
    where
    \begin{equation*}
        \square \Sc(M_i) = \sum_\mu d_\mu(M_i) \sigma_{\square \mu},
    \end{equation*}
    and \(\square \mu\) is obtained from \(\mu\) by attaching a \(r_i\) by \(n_{3-i} - r_{3-i}\) rectangle to the left of the Young diagram of \(\mu\).
\end{thm}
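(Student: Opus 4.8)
\emph{The plan} is to reduce to representable matroids, realize $\Sc(M_1\oplus M_2)$ as a push-forward along the block-diagonal embedding of Grassmannians, and evaluate that push-forward combinatorially. For the reduction: a matroid subdivision of $P(M_1)$ becomes, after taking products with $P(M_2)$, a matroid subdivision of $P(M_1\oplus M_2)=P(M_1)\times P(M_2)$, so by valuativity of $\Sc$ the map $M_1\mapsto\Sc(M_1\oplus M_2)$ is a valuative matroid invariant for each fixed $M_2$; the map $M_1\mapsto(\square\Sc(M_1))(\square\Sc(M_2))$ is also one, being a $\Z$-linear combination of the valuative invariants $d_\mu$, and the same holds with $M_1$ and $M_2$ interchanged. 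Since a valuative matroid invariant is determined by its values on Schubert matroids, it suffices to prove the theorem when $M_1$ and $M_2$ are both Schubert matroids (fix $M_2$ Schubert and vary $M_1$; then fix $M_1$ and vary $M_2$). In that case $M_1$, $M_2$, and hence $M_1\oplus M_2$, are representable over $\CC$ --- use block-diagonal representing matrices --- so write $M_i=M_{x_i}$ with $x_i\in G(r_i,n_i)$.

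Set $n=n_1+n_2$, $r=r_1+r_2$, split $\CC^n=\CC^{n_1}\oplus\CC^{n_2}$, write $G_i=G(r_i,n_i)$, and let $\iota\colon G_1\times G_2\hookrightarrow G(r,n)$ be the closed embedding $(L_1,L_2)\mapsto L_1\oplus L_2$. Then $x_1\oplus x_2$ represents $M_1\oplus M_2$, and since $T=(\CC^*)^n$ decomposes as $(\CC^*)^{n_1}\times(\CC^*)^{n_2}$ acting block-diagonally, $\overline{T\cdot(x_1\oplus x_2)}=\iota\bigl(\overline{T_1x_1}\times\overline{T_2x_2}\bigr)$; therefore $\Sc(M_1\oplus M_2)=\iota_*(\Sc(M_1)\times\Sc(M_2))$, where $\times$ is the exterior product of Chow classes. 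By $\Z$-linearity the theorem is thus equivalent to the cohomological identity
\[
    \iota_*(\sigma_\mu\times\sigma_\nu)=\sigma_{\square\mu}\cdot\sigma_{\square\nu}\quad\text{in }A^\bullet(G(r,n)),
\]
for all $\mu\subseteq r_1\times(n_1-r_1)$ and $\nu\subseteq r_2\times(n_2-r_2)$, where on the left $\square\mu$ attaches the $r_1\times(n_2-r_2)$ rectangle and on the right $\square\nu$ the $r_2\times(n_1-r_1)$ rectangle; a quick count shows both sides sit in $A^{|\mu|+|\nu|+r_1(n_2-r_2)+r_2(n_1-r_1)}(G(r,n))$.

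To prove this identity, observe that under $\iota$ the tautological quotient bundle $Q$ on $G(r,n)$ pulls back to the direct sum of the tautological quotients on the two factors, so, writing Schubert classes as Schur polynomials in the Chern roots of the quotient, $\iota^*\sigma_\kappa=\sum_{\xi,\eta}c^\kappa_{\xi\eta}\,\sigma_\xi\times\sigma_\eta$ with $c^\kappa_{\xi\eta}$ the Littlewood--Richardson coefficients. The projection formula then yields, for $\kappa\subseteq r\times(n-r)$ of complementary dimension,
\[
    \deg\bigl(\iota_*(\sigma_\mu\times\sigma_\nu)\cdot\sigma_\kappa\bigr)=\deg_{G_1\times G_2}\bigl((\sigma_\mu\times\sigma_\nu)\cdot\iota^*\sigma_\kappa\bigr)=c^\kappa_{\mu^c,\,\nu^c},
\]
where $\mu^c$ and $\nu^c$ are the complements of $\mu$, $\nu$ inside $r_1\times(n_1-r_1)$, $r_2\times(n_2-r_2)$; meanwhile $\deg(\sigma_{\square\mu}\cdot\sigma_{\square\nu}\cdot\sigma_\kappa)=c^{\kappa^c}_{\square\mu,\,\square\nu}$ by the complementary-dimension formula. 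So the claim reduces to the Littlewood--Richardson identity $c^{\kappa^c}_{\square\mu,\,\square\nu}=c^\kappa_{\mu^c,\,\nu^c}$ for all $\kappa\subseteq r\times(n-r)$. This I would obtain from the complementation symmetry $c^\lambda_{\alpha\beta}=c^{\alpha^c}_{\beta,\,\lambda^c}$ inside the $r\times(n-r)$ box (a consequence of $s_{\lambda/\alpha}=s_{\alpha^c/\lambda^c}$ for skew shapes), which rewrites the left side as $c^{(\square\mu)^c}_{\square\nu,\,\kappa}=\langle s_\kappa,\,s_{(\square\mu)^c/\square\nu}\rangle$, together with the observation that the skew shape $(\square\mu)^c/\square\nu$ is a disjoint union --- sharing neither a row nor a column --- of a translate of $\mu^c$ in its lower-left and a translate of $\nu^c$ in its upper-right, whence $s_{(\square\mu)^c/\square\nu}=s_{\mu^c}\,s_{\nu^c}$ and the pairing with $s_\kappa$ is exactly $c^\kappa_{\mu^c,\,\nu^c}$.

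The reduction in the first paragraph and the push-forward description in the second are formal; the real content, and the step I expect to be the main obstacle, is the cohomological identity, i.e.\ the Littlewood--Richardson identity just stated. Phrased that way it is short, but the bookkeeping is delicate: three different rectangles are in play, the $180^\circ$ rotations hidden in $\lambda\mapsto\lambda^c$ must be tracked carefully, and one must verify precisely that $(\square\mu)^c/\square\nu$ decomposes with its two pieces sharing no common row or column. An alternative to the third paragraph, avoiding the Littlewood--Richardson identity at the cost of more involved bundle computations, is to factor $\iota$ through the relative Grassmannian $\{(L_1,L)\colon L_1\subseteq L\cap\CC^{n_1},\ \dim L_1=r_1\}$ over $G(r_1,n_1)$ and compute $\iota_*$ using Grassmann-bundle and Thom--Porteous formulas.
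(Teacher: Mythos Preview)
Your proof is correct. The reduction to representable matroids via valuativity and the identification $\overline{T(x_1\oplus x_2)}=\iota(\overline{T_1x_1}\times\overline{T_2x_2})$ match the paper exactly. The difference lies in how the resulting Schubert calculus is handled. The paper proves a geometric lemma: for any subvariety $X\subseteq G(r_1,n_1)$ one has $[\iota(X\times G(r_2,n_2))]=\sum_\mu d_\mu(X)\,\sigma_{\square\mu}$, established by building an explicit flag $V^+_\bullet$ on $\CC^n$ so that the points of $\iota(X\times G_2)\cap X_{(\square\mu)^c}(V^+_\bullet)$ biject with those of $X\cap X_{\mu^c}(V_\bullet)$; writing $\overline{Tz}$ as the intersection of $\iota(\overline{T_1x_1}\times G_2)$ and $\iota(G_1\times\overline{T_2x_2})$ then yields the product formula without any Littlewood--Richardson machinery. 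You instead compute $\iota_*(\sigma_\mu\times\sigma_\nu)$ algebraically, pairing against $\sigma_\kappa$ via the projection formula and the coproduct $\iota^*\sigma_\kappa=\sum c^\kappa_{\xi\eta}\,\sigma_\xi\times\sigma_\eta$, and reducing to the identity $c^{\kappa^c}_{\square\mu,\square\nu}=c^\kappa_{\mu^c,\nu^c}$, which follows from the factorization $s_{(\square\mu)^c/\square\nu}=s_{\mu^c}s_{\nu^c}$ since the two pieces of that skew shape occupy disjoint row and column ranges. The paper's route is more elementary and self-contained; yours imports standard symmetric-function identities but makes the push-forward structure transparent and would adapt more readily to equivariant or $K$-theoretic refinements.
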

Notice that there is no assumption on \(M_i\) being connected in \cref{thm:disconnected}, so if \(M\) has more than \(2\) connected components applying the theorem repeatedly yields a formula for \(\Sc(M)\) in terms of the Schubert coefficients of its connected components.

\begin{example}
    We use \cref{thm:disconnected} to compute \(\Sc(U_{2,4} \oplus U_{2,5})\). By \cref{thm:Klyachko} we have
    \begin{align*}
        \Sc(U_{2,4}) &= 2 \sigma_{(1,0)}, \\
        \Sc(U_{2,5}) &= 3 \sigma_{(2,0)} + 1 \sigma_{(1,1)}.
    \end{align*}
    Attaching a \(2\times 3\) rectangle to the left of \((1,0)\) and a \(2 \times 2\) rectangle to the left of \((2,0)\) and \((1,1)\) gives
    \begin{align*}
        \square\Sc(U_{2,4}) &= 2 \sigma_{(4,3)}, \\
        \square \Sc(U_{2,5}) &= 3 \sigma_{(4,2)} + 1 \sigma_{(3,3)}.
    \end{align*}
    Now computing the product \((\square\Sc(U_{2,4}))(\square \Sc(U_{2,5}))\) in \(A^\bullet(G(4,9))\) gives
    \begin{align*}
        \Sc(U_{2,4} \oplus U_{2,5}) &= (2 \sigma_{(4,3)})(3 \sigma_{(4,2)} + 1 \sigma_{(3,3)})\\
        &= 2\sigma_{(4, 3, 3, 3)} + 8\sigma_{(4, 4, 3, 2)} + 6\sigma_{(4, 4, 4, 1)} + 8\sigma_{(5, 3, 3, 2)} \\
        & + 8\sigma_{(5, 4, 2, 2)} + 14\sigma_{(5, 4, 3, 1)} + 6\sigma_{(5, 4, 4, 0)} + 8\sigma_{(5, 5, 2, 1)} + 6\sigma_{(5, 5, 3, 0)}.
    \end{align*}
\end{example}

Throughout fix $r_1 < n_1$ and $r_2 < n_2$ and let $r = r_1 + r_2$ and $n = n_1 + n_2$. We consider the embedding 
\begin{equation*}
    \iota \colon G(r_1,n_1) \times G(r_2,n_2) \hookrightarrow G(r,n)
\end{equation*}
defined by $L(\iota(x,y)) = L(x) \oplus L(y) \subseteq \CC^{n_1} \oplus \CC^{n_2} = \CC^n$.

\begin{lemma}
    \label{lem:varietyLift}
    Let $X$ be a subvariety of $G(r_1,n_2)$ of codimension $s$ with
    \begin{equation*}
        [X] = \sum_\mu d_\mu(X) \sigma_\mu \in A^\bullet(G(r_1,n_1))
    \end{equation*}
    for $d_\mu(X) \in \ZZ$ and let $\tilde{X} = \iota(X \times G(r_2,n_2))$. Then
    \begin{equation*}
        [\tilde{X}] = \sum_\mu d_\mu(X) \sigma_{\square \mu} \in A^\bullet(G(r,n))
    \end{equation*}
    where both sums are over all partitions $\mu \subseteq r_1 \times (n_1-r_1)$ with $|\mu| = s$ and $\square \mu = (n_2-r_2 + \mu_1, \dots, n_2-r_2 + \mu_{k_1}) \subseteq r\times (n-r)$.
\end{lemma}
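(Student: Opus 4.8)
The plan is to compute $[\tilde X]$ by pushing forward along $\iota$ and understanding how $\iota^*$ and $\iota_*$ interact with the Schubert classes. First I would record the basic geometry: the map $\iota$ is a closed embedding, and $\tilde X = \iota(X \times G(r_2,n_2))$ has codimension $s$ in $G(r,n)$ because $\dim(X\times G(r_2,n_2)) = \dim G(r_1,n_1) - s + \dim G(r_2,n_2)$ while $\dim G(r,n) - \dim(G(r_1,n_1)\times G(r_2,n_2)) = r_1 r_2 \cdot(\text{something})$ — more carefully, a dimension count shows $\operatorname{codim}_{G(r,n)} \tilde X = s + (r_2(n_1-r_1) + r_1(n_2-r_2))$, which matches $|\square\mu| = |\mu| + r_1(n_2-r_2)$ once we also account for the factor $G(r_2,n_2)$ being carried along. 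The cleanest route is therefore to identify $\tilde X$ directly as a Schubert-type locus: choosing a complete flag $V_\bullet$ of $\CC^n$ adapted to the splitting $\CC^n = \CC^{n_1}\oplus\CC^{n_2}$ (i.e. $V_\bullet$ refines the two-step flag $0 \subset \CC^{n_2} \subset \CC^n$, so that $V_{n_2} = \CC^{n_2}$), and writing $X$ as an intersection of its defining cycles, one checks that $\tilde X = \{z \in G(r,n) : L(z) \supseteq \CC^{n_2},\ \text{and } L(z)/\CC^{n_2} \in X \subseteq G(r_1,\CC^n/\CC^{n_2})\}$, up to the standard identification.

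Next I would make the Schubert bookkeeping precise. The condition $L(z)\supseteq \CC^{n_2}$ already imposes that $z$ lies in the Schubert variety $X_{\rho}(V_\bullet)$ for $\rho$ the rectangular partition $r_2\times(n_1+r_1)$ (equivalently, the locus where the sub-$\CC^{n_2}$ is forced), and $\tilde X$ is then cut out inside that sub-Grassmannian $\{L \supseteq \CC^{n_2}\} \cong G(r_1, n_1)$ by exactly the conditions defining $X$. The operation $\mu \mapsto \square\mu$ is precisely the effect on jumping sequences of this restriction: if $\mu$ has jumping sequence $(j_i)_{i=1}^{r_1}$ relative to the $r_1\times(n_1-r_1)$ rectangle, then $\square\mu$ has jumping sequence obtained by prepending the first $r_2$ integers $1,2,\dots,r_2$ (these come from the $\CC^{n_2}$ being contained) and shifting the remaining entries by $n_2$; translating back to partitions gives $\square\mu = (n_2-r_2+\mu_1,\dots,n_2-r_2+\mu_{k_1})$, with the trailing $r_2$ parts of the $r\times(n-r)$ rectangle equal to the full width $n_2-r_2$, hence omitted from the partition notation. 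Carrying this through termwise over the expansion $[X] = \sum_\mu d_\mu(X)\sigma_\mu$ yields $[\tilde X] = \sum_\mu d_\mu(X)\sigma_{\square\mu}$.

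I expect the main obstacle to be the identification of $\tilde X$ with the stated Schubert-type locus and, relatedly, verifying that the class $[\tilde X]$ is genuinely computed by restricting the defining equations of $X$ rather than by a more subtle excess-intersection contribution — i.e. that the embedding $\iota$ is, for the purposes of this computation, a regular embedding whose normal bundle does not distort the class. One clean way to sidestep excess-intersection worries is to argue intersection-theoretically: compute $d_{\square\mu}([\tilde X]) = \deg([\tilde X]\cdot \sigma_{(\square\mu)^c})$ by choosing a generic flag and a generic translate of the opposite Schubert variety $X_{(\square\mu)^c}$, and observe that the intersection points all lie in the sub-Grassmannian $\{L \supseteq \CC^{n_2}\}$, where they are counted exactly by $\deg([X]\cdot\sigma_{\mu^c}) = d_\mu(X)$ — provided one checks that $(\square\mu)^c$, restricted to this sub-Grassmannian, is $\mu^c$, and that genericity is preserved under this restriction. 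This reduces everything to the combinatorial claim about complements of $\square$-partitions, which is a direct (if slightly fiddly) check on jumping sequences inside the $r\times(n-r)$ rectangle.
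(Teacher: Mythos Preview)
Your final-paragraph strategy---compute $d_{\square\mu}([\tilde X]) = \deg([\tilde X]\cdot\sigma_{(\square\mu)^c})$ by choosing a flag adapted to the splitting and exhibiting a bijection of intersection points with $X\cap X_{\mu^c}(V_\bullet)$---is exactly the paper's approach, and the combinatorial observation that the jumping sequence of $(\square\mu)^c$ is $(1,2,\dots,r_2,\,n_2+j_1,\dots,n_2+j_{r_1})$ is the same key computation the paper carries out.

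However, your geometric description contains a genuine error that, if left uncorrected, would derail the argument. You repeatedly assert that points $z\in\tilde X$ satisfy $L(z)\supseteq\CC^{n_2}$, and that the intersection points lie in the sub-Grassmannian $\{L\supseteq\CC^{n_2}\}$. This is false: by definition $L(\iota(x,y)) = L(x)\oplus L(y)$ with $L(y)\subset\CC^{n_2}$ of dimension $r_2$, so $L(z)\cap\CC^{n_2} = L(y)$ is only $r_2$-dimensional (and indeed $\{L\supseteq\CC^{n_2}\}$ is empty unless $r\geq n_2$). The correct picture, which the paper uses, is this: choose the adapted flag $V^+_\bullet$ so that $V^+_{r_2} = 0\oplus U$ for a \emph{fixed} $r_2$-dimensional $U\subset\CC^{n_2}$ and $V^+_{n_2+j} = V_j\oplus\CC^{n_2}$. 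Then for $z=\iota(x,y)\in\tilde X$ the first Schubert condition $\dim(L(z)\cap V^+_{r_2})\geq r_2$ becomes $\dim(L(y)\cap U)\geq r_2$, i.e.\ $L(y)=U$; the remaining conditions reduce to $\dim(L(x)\cap V_{j_i})\geq i$, which are exactly the defining inequalities of $X_{\mu^c}(V_\bullet)$. So the intersection points lie in $\iota(G(r_1,n_1)\times\{U\})\cong G(r_1,n_1)$, not in any locus of the form $\{L\supseteq\CC^{n_2}\}$. Once you replace your containment condition with this fixed-$U$ reduction, the rest of your plan goes through and coincides with the paper's proof.
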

\begin{proof}
    For fixed $\mu$ we need to show that 
    \[
        \deg([\tilde{X}] \sigma_{(\square \mu)^c}) = \deg([X] \sigma_{\mu^c}) = d_\mu(X).
    \]
    Let $V_\bullet$ be a complete flag of $\CC^{n_1}$ such that the intersection $X \cap X_{\mu^c}(V_\bullet)$ is transverse. Fix a $k_2$-dimensional subspace $U$ of $\CC^{n_2}$ and let $V^+_\bullet$ be a complete flag of $\CC^n = \CC^{n_1} \oplus \CC^{n_2}$ such that $V^+_{k_2} = 0 \oplus U$ and  $V^+_{n_2 + j} = V_j \oplus \CC^{n_2}$ for $j = 1, 2, \dots, n_1$. Let $(j_i)_{i=1}^{r_1}$ be the jumping sequence of $\mu^c$ in $r_1 \times (n_1-r_1)$. Then notice that the complement of $\square \mu$ in $r \times (n-r)$ is the partition obtained by attaching the $r_2 \times (n-r)$ rectangle to the top of the Young diagram of $\mu^c$ i.e. the jumping sequence of $(\square \mu)^c$ is
    \begin{equation*}
        (1,2,\dots, r_2, n_2 + j_1, n_2 + j_2, \dots n_2 + j_{r_1}).
    \end{equation*}
    From the first part of the jumping sequence above only the last term contributes with a defining inequality of the corresponding Schubert variety. So $X_{(\square \mu)^c}(V^+_\bullet)$ is given by 
    \begin{equation*}
        \left\{z \in G(r,n) \mid \dim(L(z) \cap V^+_{r_2}) \geq r_2, \ \dim(L(z) \cap V^+_{n_2+j_i}) \geq r_2 + i\right\}.
    \end{equation*}
    We need to show that $d_\mu(X) = |X \cap X_{\mu^c}(V_\bullet)| = |\tilde{X} \cap X_{(\square \mu)^c}(V^+_\bullet)|$. Suppose $z \in \tilde{X} \cap X_{(\square \mu)^c}(V^+_\bullet)$, since $z \in \tilde{X}$,  $L(z) = L(x) \oplus L(y)$ for $x \in X$ and $y \in G(r_2,n_2)$. Now
    \begin{equation*}
        L(z) \cap V^+_{r_2} = (L(x) \oplus L(y)) \cap (0 \oplus U) = 0 \oplus (L(y) \cap U)
    \end{equation*}
    so the first defining inequality of $X_{(\square \mu)^c}(V^+_\bullet)$ becomes 
    \begin{equation*}
        \dim(0\oplus (L(y) \cap U)) \geq r_2 \iff L(y) = U
    \end{equation*}
    since both $L(y)$ and $U$ are of dimension $r_2$. For the second type of defining inequality of $X_{(\square \mu)^c}(V^+_\bullet)$ we have
    \begin{equation*}
        L(z) \cap V^+_{n_2 + j_i} = (L(x) \oplus L(y)) \cap (V_{j_i} \oplus \CC^{n_2}) = (L(x) \cap V_{j_i}) \oplus U.
    \end{equation*}
    so 
    \begin{equation*}
        \dim(L(z) \cap V^+_{n_2 + j_i}) \geq k_2 + i \iff \dim(L(x) \cap V_{j_i}) \geq i
    \end{equation*}
    which are the defining inequalities of $X_{\mu^c}(V_\bullet)$. Hence if $z = \iota(x,y) \in \tilde{X} \cap X_{(\square \mu)^c}(V^+_\bullet)$ then $x \in X \cap X_{\mu^c}(V_\bullet)$ and if $x \in X \cap X_{\mu^c}(V_\bullet)$ then $\iota(x,U) \in \tilde{X} \cap X_{(\square \mu)^c}(V^+_\bullet)$ so $|X \cap X_{\mu^c}(V_\bullet)| = |\tilde{X} \cap X_{(\square \mu)^c}(V^+_\bullet)|$.
\end{proof}

By a similar argument for subvarieties $Y$ of $G(r_2,n_2)$ with 
\begin{equation*}
    [Y] = \sum_\eta d_\eta(Y) \sigma_\eta \in A^\bullet(G(r_2,n_2)),
\end{equation*}
let $\tilde{Y} = \iota(G(r_1,n_2) \times Y)$. Then we have
\begin{equation*}
    [\tilde{Y}] = \sum_\eta d_\eta(Y) \sigma_{\square \eta} \in A^\bullet (G(r,n))
\end{equation*}
where $\square \eta = (n_1-r_1+\eta_1, \dots, n_1-r_1 + \eta_{r_2}) \subseteq r \times (n-r)$.

Using \cref{lem:varietyLift} we can now prove \cref{thm:disconnected} in the case where $M_1$ and $M_2$ are representable over $\CC$.
\begin{proof}
    Fix $x \in G(r_1,n_1)$ and $y \in G(r_2,n_2)$ and let $z = \iota(x,y)$. Let $T_i = (\CC^*)^{n_i}$ be the algebraic torus acting on $G(r_i,n_i)$  for $i = 1,2$ and $T = T_1 \times T_2 = (\CC^*)^n$ be the algebraic torus acting on $G(r,n)$. Notice that
    \begin{equation*}
        \overline{Tz} = \iota(\overline{T_1 x} \times \overline{T_2 y}) = \iota(\overline{T_1 x} \times G(r_2,n_2)) \cap \iota(G(r_1,n_2) \times \overline{T_2 y}),
    \end{equation*}
    hence $[\overline{Tz}] = [\tilde{\overline{T_1 x}}][\tilde{\overline{T_2 y}}]$.
    
    Since $M_z = M_x \oplus M_y$ applying \cref{lem:varietyLift} to $\overline{T_1 x}$ and $\overline{T_2 y}$ gives
    \begin{equation*}
        \Sc(M_z) = (\sum_\mu d_\mu(M_x) \sigma_{\square \mu}) (\sum_\eta d_\eta(M_y) \sigma_{\square \eta}) = (\square Sc(M_x))(\square Sc(M_y)). \qedhere
    \end{equation*}
\end{proof}

To prove \cref{thm:disconnected} for arbitrary matroids, we use the fact that valuative matroid invariants are determined by their values on Schubert matroids, which are representable over $\CC$. In particular if two valuative matroid invariants are equal on all matroids representable over $\CC$, they are equal on all matroids.

\begin{proof}[proof of \cref{thm:disconnected}]
    Fix a matroid $M_1 \in S(r_1,n_1)$ that is representable over $\CC$. Notice that both
    \begin{equation*}
        \Sc(M_1 \oplus - ) \colon S(r_2,n_2) \to A^\bullet(G(r,n))
    \end{equation*}
    and 
    \begin{equation*}
        (\square Sc(M_1)) (\square Sc(-)) \colon S(r_2,n_2) \to A^\bullet(G(r,n))
    \end{equation*}
    are valuative matroid invariants that are equal on matroids representable over $\CC$ hence $\Sc(M_1 \oplus M_2) = (\square \Sc(M_1)) (\square \Sc(M_2))$ for all matroids $M_2 \in S(r_2,n_2)$.
    
    Now fix a matroid $M_2 \in S(r_2,n_2)$ and notice again that 
    \begin{equation*}
        \Sc(- \oplus M_2) \colon S(r_1,n_1) \to A^\bullet(G(r,n))
    \end{equation*}
    and 
    \begin{equation*}
        (\square \Sc(-)) (\square \Sc(M_2)) \colon S(r_1,n_1) \to A^\bullet(G(r,n))
    \end{equation*}
    are valuative matroid invariants that are equal on matroids representable over $\CC$ hence $\Sc(M_1 \oplus M_2) = (\square \Sc(M_1)) (\square \Sc(M_2))$ for all matroids $M_1 \in S(r_1,n_1)$ and $M_2 \in S(r_2,n_2)$.
\end{proof}

\begin{corollary}
\label{cor:disconnected}
    Fix a partition $\lambda \subset r \times (n-r)$ with $|\lambda| = r(n-r) - (n - \kappa(M_1 \oplus M_2))$. Then the Schubert coefficient $d_\lambda(M_1 \oplus M_2)$ is given by
    \begin{equation*}
        d_\lambda(M_1 \oplus M_2) = \sum_{\mu, \eta} d_\mu(M_1) d_\eta(M_2) c_{\square\mu \square\eta}^\lambda
    \end{equation*}
    where $c_{\square\mu \square\eta}^\lambda$ is a Littlewood-Richardson coefficient.
\end{corollary}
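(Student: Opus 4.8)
The plan is to read the coefficient of $\sigma_\lambda$ off the product formula in \cref{thm:disconnected} by expanding everything in the Schubert basis and invoking the Littlewood--Richardson rule for the Chow ring of the Grassmannian. Concretely, \cref{thm:disconnected} gives
\begin{equation*}
    \Sc(M_1\oplus M_2) = (\square\Sc(M_1))(\square\Sc(M_2)) = \sum_{\mu,\eta} d_\mu(M_1)\, d_\eta(M_2)\, \sigma_{\square\mu}\,\sigma_{\square\eta},
\end{equation*}
and the standard fact (see \cite{EisHar}) that $\sigma_\alpha\sigma_\beta = \sum_{\gamma\subseteq r\times(n-r)} c_{\alpha\beta}^\gamma\,\sigma_\gamma$ in $A^\bullet(G(r,n))$, with $c_{\alpha\beta}^\gamma$ a Littlewood--Richardson coefficient, lets me rewrite this as $\sum_{\mu,\eta,\gamma} d_\mu(M_1)\,d_\eta(M_2)\,c_{\square\mu\,\square\eta}^\gamma\,\sigma_\gamma$. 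Extracting the coefficient of $\sigma_\lambda$ then yields exactly the claimed formula.

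The steps I would carry out, in order, are: first substitute the definition of $\square\Sc(M_i)$ into \cref{thm:disconnected} and distribute the product; second apply the Littlewood--Richardson rule to each term $\sigma_{\square\mu}\sigma_{\square\eta}$; third collect the $\sigma_\lambda$ terms. To make the index set of the sum precise I would also record the degree bookkeeping: $d_\mu(M_1)$ vanishes unless $|\mu| = r_1(n_1-r_1)-(n_1-\kappa(M_1))$ and similarly for $\eta$, while $|\square\mu| = |\mu| + r_1(n_2-r_2)$ and $|\square\eta| = |\eta| + r_2(n_1-r_1)$ by the definitions of $\square\mu$ and $\square\eta$. A one-line computation using $\kappa(M_1\oplus M_2) = \kappa(M_1)+\kappa(M_2)$ then shows $|\square\mu|+|\square\eta| = r(n-r)-(n-\kappa(M_1\oplus M_2))$, which matches the hypothesis $|\lambda| = r(n-r)-(n-\kappa(M_1\oplus M_2))$. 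Hence the degree constraint on $\lambda$ is automatically compatible with every nonzero term of the sum, and terms of the wrong degree simply contribute zero.

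I do not expect a genuine obstacle here: the corollary is a direct translation of \cref{thm:disconnected} into Schubert coefficients. The only point that needs a moment's care is the observation that the Littlewood--Richardson rule in $A^\bullet(G(r,n))$ records only those partitions $\gamma$ fitting inside the $r\times(n-r)$ rectangle, so that for $\lambda\subseteq r\times(n-r)$ the coefficient $c_{\square\mu\,\square\eta}^\lambda$ appearing on the right-hand side may be taken to be the ordinary Littlewood--Richardson coefficient of symmetric functions, with no truncation correction needed; this, together with the degree check above ensuring the sum is finite, is all that is required.
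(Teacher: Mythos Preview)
Your proposal is correct and matches the paper's approach: both substitute \cref{thm:disconnected}, distribute, and invoke Littlewood--Richardson to identify the coefficient of $\sigma_\lambda$. The only cosmetic difference is that the paper extracts the coefficient via the pairing $d_\lambda(M_1\oplus M_2)=\deg(\Sc(M_1\oplus M_2)\,\sigma_{\lambda^c})$ and then identifies $\deg(\sigma_{\square\mu}\sigma_{\square\eta}\sigma_{\lambda^c})=c_{\square\mu\,\square\eta}^{\lambda}$, whereas you read it off directly from the Littlewood--Richardson expansion; these are equivalent, and your added degree bookkeeping makes the range of the sum explicit in the same way the paper does at the end of its proof.
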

\begin{proof}
    By \cref{thm:disconnected} we have
    \begin{align*}
        \Sc(M_1 \oplus M_2) &= (\sum_\mu d_\mu(M_1) \sigma_{\square \mu}) (\sum_\eta d_\eta(M_2) \sigma_{\square \eta})\\
        &= \sum_{\mu, \eta} d_\mu(M_1) d_\eta(M_2) \sigma_{\square \mu} \sigma_{\square \eta}
    \end{align*}
    and since $d_\lambda(M_1\oplus M_2) = \deg(\Sc(M_1 \oplus M_2) \sigma_{\lambda^c})$ we have
    \begin{align*}
        d_\lambda(M_1 \oplus M_2) &= \sum_{\mu, \eta} d_\mu(M_1) d_\eta(M_2) \deg(\sigma_{\square \mu} \sigma_{\square \eta} \sigma_{\lambda^c})\\
        &= \sum_{\mu, \eta} d_\mu(M_1) d_\eta(M_2) c_{\square\mu \square\eta}^\lambda
    \end{align*}
    All the sums above are over partitions $\mu \subseteq r_1 \times (n_1 - r_1)$ with $|\mu| = r_1(n_1-r_1) - (n_1 - \kappa(M_1))$ and $\eta \subseteq r_2 \times (n_2 - r_2)$ with $|\eta| = r_2(n_2-r_2) - (n_2 - \kappa(M_2))$.
\end{proof}

Notice that the formula for \(d_\lambda(M_1\oplus M_2)\) in \cref{cor:disconnected} preserves non-negativity of the Schubert coefficients. That is if both \(M_1\) and \(M_2\) have non-negative Schubert coefficients, so does \(M_1 \oplus M_2\). So it is enough to prove \cref{conj:positivity} for connected matroids.
\section{Schubert coefficients of sparse paving matroids}
In this section we compute the Schubert coefficients of sparse paving matroids. The observation that allows us to do so is the following.
\begin{proposition}
    \label{prop:VolDeg}
    For $x \in G(r,n)$ let $s = \dim(\T) = n - \kappa(M_x)$. The following equality holds
    \begin{equation}
        \label{eq:impLem}
        \deg([\T]\sigma_{(1)}^s) = \Vol_s(P(M_x)).
    \end{equation}
    This gives the linear relation among the Schubert coefficients of $M_x$ 
    \begin{equation}
        \label{eq:linRel}
        \sum_\lambda c_{\lambda^c} d_\lambda(M_x) = \Vol_s(P(M_x)),
    \end{equation}
    where $c_{\lambda^c}$ are the integer coefficients found in the expansion of $\sigma_{(1)}^s$. That is,  $c_{\lambda^c} = \deg(\sigma_{(1)}^s \sigma_{\lambda}).$ Here the sum is taken over all partitions $\lambda \subseteq r\times(n-r)$ with $|\lambda| = r(n-r) - s$.
\end{proposition}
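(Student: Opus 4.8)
The plan is to recognize $\deg([\T]\sigma_{(1)}^s)$ as the degree of $\T$ as a projective variety under the Plücker embedding, and then to invoke the identity $\deg(\T) = \Vol_s(P(M_x))$ already recorded above. The only geometric input beyond that identity is the classical fact that, under $G(r,n) \hookrightarrow \PP^{\binom{n}{r}-1}$, the restriction of the hyperplane class is $\sigma_{(1)}$; equivalently $\mathcal{O}_{G(r,n)}(1)$ has first Chern class $\sigma_{(1)}$ (see \cite{EisHar}).

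First I would argue that for any irreducible subvariety $V \subseteq G(r,n)$ of dimension $s$, one has $\deg([V]\sigma_{(1)}^s) = \deg(V)$, where $\deg(V)$ is the degree of $V$ in $\PP^{\binom{n}{r}-1}$. Indeed, $\deg(V)$ is by definition $\#(V \cap L)$ for a generic linear subspace $L$ of codimension $s$; by Kleiman's transversality theorem this intersection is transverse and reduced, so the count equals the intersection number $[V]\cdot h^s$ in $A^\bullet(\PP^{\binom{n}{r}-1})$, which pushes forward to $\deg([V]\cdot\sigma_{(1)}^s)$ in $A^\bullet(G(r,n))$ via the projection formula. Applying this with $V = \T$, which has dimension $s = n - \kappa(M_x)$ by the dimension formula for matroid polytopes, and then using $\deg(\T) = \Vol_s(P(M_x))$, yields \eqref{eq:impLem}.

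To deduce the linear relation, expand $\sigma_{(1)}^s = \sum_{\nu \,:\, |\nu| = s} c_\nu \sigma_\nu$ in the Schubert basis. For a partition $\lambda \subseteq r\times(n-r)$ with $|\lambda| = r(n-r) - s$, the complementary dimension formula gives $\deg(\sigma_{(1)}^s\sigma_\lambda) = \sum_\nu c_\nu \deg(\sigma_\nu\sigma_\lambda) = c_{\lambda^c}$, so the coefficient $c_{\lambda^c}$ from the expansion agrees with $\deg(\sigma_{(1)}^s\sigma_\lambda)$ as asserted. Writing $[\T] = \sum_\lambda d_\lambda(M_x)\sigma_\lambda$ with the sum over $\lambda$ of size $r(n-r) - s$ and pairing with $\sigma_{(1)}^s$ gives
\[
    \Vol_s(P(M_x)) = \deg([\T]\sigma_{(1)}^s) = \sum_\lambda d_\lambda(M_x)\deg(\sigma_\lambda\sigma_{(1)}^s) = \sum_\lambda c_{\lambda^c}\, d_\lambda(M_x),
\]
which is \eqref{eq:linRel}.

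There is no serious obstacle here: the one point requiring care is that the intersection-theoretic product $\deg([V]\sigma_{(1)}^s)$ genuinely computes the geometric point count cut out by generic hyperplanes, which is handled by Kleiman transversality for the transitive $PGL$-action on $\PP^{\binom{n}{r}-1}$ (and holds regardless of $\T$ being singular, since the transversality concerns the generic linear space, not $\T$). The remaining ingredient, $\deg(\T) = \Vol_s(P(M_x))$, is the standard computation of the degree of a projective toric variety as the normalized volume of its defining polytope and is already stated in the excerpt, so it may simply be cited rather than reproved.
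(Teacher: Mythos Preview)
Your proposal is correct and follows essentially the same approach as the paper: both compute the projective degree of $\T$ in two ways---once via $\deg([\T]\sigma_{(1)}^s)$ using that $\sigma_{(1)}$ is the restricted hyperplane class, and once via $\Vol_s(P(M_x))$ using the toric description---and then expand in the Schubert basis and apply the complementary dimension formula to obtain the linear relation. Your version is slightly more explicit (invoking Kleiman transversality and the projection formula), but the structure and ideas are identical.
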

\begin{proof}
    There are two ways of computing the degree $D$ of $\T$ in $\PP^{\binom{n}{r}-1}$. First, since $\T$ is a subvariety of $G(r,n)$, $D = \deg([\T]\sigma_{(1)}^s)$. Second, since $\T$ is isomorphic to the projective toric variety obtained from $P(M_x)$, $D = Vol_s(P(M_x))$. Expanding $[\T]$ and $\sigma_{(1)}^s$ gives
    \begin{align*}
        [\T] &= \sum_\lambda d_\lambda(M_x)\sigma_\lambda \text{ and }\\
        \sigma_{(1)}^s &= \sum_\lambda c_{\lambda^c} \sigma_{\lambda^c}.
    \end{align*}
    Now by the complimentary dimension formula the left hand side of \cref{eq:impLem} becomes the left hand side of \cref{eq:linRel}.
\end{proof}

The proof above only works for matroids representable over $\CC$, but we can extend the result to arbitrary matroids using the fact that valuative matroid invariants are determined by their values on Schubert matroids.
\begin{corollary}
    Let $M$ be a rank $r$ matroid on $[n]$ and $s = n - \kappa(M)$ then 
    \begin{equation*}
        \deg(\Sc(M)\sigma_{(1)}^s) = Vol_s(P(M)).
    \end{equation*}
    This gives the linear relation 
    \begin{equation*}
        \sum_\lambda c_{\lambda^c} d_\lambda(M) = \Vol_s(P(M)),
    \end{equation*}
    where $c_{\lambda^c}$ are as in \cref{prop:VolDeg}.
\end{corollary}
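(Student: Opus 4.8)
The linear relation in the statement follows from the degree identity $\deg(\Sc(M)\sigma_{(1)}^s)=\Vol_s(P(M))$ exactly as in the proof of \cref{prop:VolDeg}: expand $\Sc(M)=\sum_\lambda d_\lambda(M)\sigma_\lambda$ and $\sigma_{(1)}^s=\sum_\lambda c_{\lambda^c}\sigma_{\lambda^c}$ and apply the complimentary dimension formula. So the plan is to prove $\deg(\Sc(M)\sigma_{(1)}^s)=\Vol_s(P(M))$, and I would split into the connected case ($s=n-1$) and the disconnected case ($s=n-\kappa(M)<n-1$), which behave quite differently.

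For the connected case I would show that the two functions
\[
    M \longmapsto \deg\big(\Sc(M)\,\sigma_{(1)}^{\,n-1}\big)
    \qquad\text{and}\qquad
    M \longmapsto \Vol_{n-1}(P(M))
\]
are valuative matroid invariants on $S(r,[n])$ that agree on every Schubert matroid. For the first, note that $\deg(\Sc(M)\sigma_{(1)}^{n-1})$ equals the fixed $\Z$-linear combination $\sum_{|\lambda|=r(n-r)-(n-1)}d_\lambda(M)\deg(\sigma_\lambda\sigma_{(1)}^{n-1})$ of the valuative functions $d_\lambda$ of \cref{thm:BergetFink}, because the remaining Schubert classes occurring in $\Sc(M)$ have $|\lambda|=r(n-r)-(n-\kappa(M))>r(n-r)-(n-1)$ when $\kappa(M)\geq 2$, so for those $\sigma_\lambda\sigma_{(1)}^{n-1}$ lies above the top degree and contributes $0$; in particular this function also vanishes on disconnected matroids. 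The second function is valuative (and additive) by the example computing $\Vol_{n-1}(P(-))$. On a connected Schubert matroid the two functions agree by \cref{prop:VolDeg} applied with $s=n-1$, and on a disconnected Schubert matroid both vanish. Since a valuative matroid invariant is determined by its values on Schubert matroids, the two functions coincide on all of $S(r,[n])$, which in particular proves the claim for connected $M$.

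For the disconnected case I would induct on $\kappa(M)$, the base case $\kappa(M)=1$ being the connected case. Write $M=M_1\oplus M_2$ with $M_2$ a connected component, $M_i$ of rank $r_i$ on $[n_i]$, and set $s_1=n_1-\kappa(M_1)$, $s_2=n_2-1$, so $s=s_1+s_2$. On the polytope side $P(M)=P(M_1)\times P(M_2)$, and the normalized volume of a product of lattice polytopes satisfies $\Vol_{s_1+s_2}(P\times Q)=\binom{s_1+s_2}{s_1}\Vol_{s_1}(P)\Vol_{s_2}(Q)$. On the Chow side \cref{thm:disconnected} gives $\Sc(M)=(\square\Sc(M_1))(\square\Sc(M_2))$, and I would establish the companion identity
\[
    \deg\big((\square\alpha)(\square\beta)\,\sigma_{(1)}^{\,s_1+s_2}\big)
    = \binom{s_1+s_2}{s_1}\deg\big(\alpha\,\sigma_{(1)}^{\,s_1}\big)\deg\big(\beta\,\sigma_{(1)}^{\,s_2}\big)
\]
for $\alpha\in A^\bullet(G(r_1,n_1))$ and $\beta\in A^\bullet(G(r_2,n_2))$ of the relevant codimensions. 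Taking $\alpha=\Sc(M_1)$, $\beta=\Sc(M_2)$ and combining this with the inductive hypothesis for $M_1$, the connected case for $M_2$, and the volume-of-product formula then closes the induction.

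The companion identity is the crux, and I would reduce it to the case $\alpha=[X]$, $\beta=[Y]$ of subvarieties in general position (Schubert classes are of this form and span). Then \cref{lem:varietyLift} identifies $(\square[X])(\square[Y])$ with $[\iota(X\times Y)]$, so the left-hand side is the degree of the projective variety $\iota(X\times Y)$. The essential geometric input is that $\iota$ pulls the Plücker line bundle of $G(r,n)$ back to the external product of the Plücker line bundles of the two factors --- since a Plücker coordinate of $L(x)\oplus L(y)$ is, up to sign, a product of a Plücker coordinate of $x$ with one of $y$ --- equivalently $\iota^*\sigma_{(1)}=\sigma_{(1)}\times 1+1\times\sigma_{(1)}$; together with the standard formula for the degree of a product of projective varieties embedded by a Segre-type line bundle, this produces exactly the binomial coefficient $\binom{s_1+s_2}{s_1}$ and the product of degrees. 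I expect the main obstacle to be precisely this step: pinning down the line-bundle restriction and matching its binomial coefficients against the normalization convention for volumes of products of matroid polytopes; the rest is routine bookkeeping. Alternatively one could prove the companion identity by a direct intersection count in the spirit of \cref{lem:varietyLift}, using a complete flag of $\CC^n=\CC^{n_1}\oplus\CC^{n_2}$ adapted to the splitting, trading the line-bundle computation for manipulations of jumping sequences.
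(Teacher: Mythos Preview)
Your connected case is exactly the paper's argument specialised to $s=n-1$, and it is correct. The difference is that the paper does not split into connected and disconnected cases at all: it simply runs your connected-case argument for \emph{every} fixed $s$. For any $s\le r(n-r)$, both
\[
M\longmapsto \Vol_s(P(M))
\qquad\text{and}\qquad
M\longmapsto \deg\Big(\big(\textstyle\sum_{|\lambda|=r(n-r)-s}d_\lambda(M)\sigma_\lambda\big)\,\sigma_{(1)}^{\,s}\Big)
\]
are valuative matroid invariants on $S(r,[n])$; on a Schubert matroid with $n-\kappa=s$ they agree by \cref{prop:VolDeg}, and on a Schubert matroid with $n-\kappa\neq s$ both vanish (the polytope has the wrong dimension, and all $d_\lambda$ with $|\lambda|=r(n-r)-s$ are zero by definition). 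Hence the two invariants coincide on all of $S(r,[n])$, and specialising to $s=n-\kappa(M)$ gives the corollary. No induction on $\kappa$, no appeal to \cref{thm:disconnected}, and no companion identity are needed.

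Your disconnected-case argument is nonetheless correct. The companion identity you isolate is genuine: since $\iota^*\sigma_{(1)}=\sigma_{(1)}\times 1+1\times\sigma_{(1)}$ and, via \cref{lem:varietyLift}, $(\square[X])(\square[Y])=[\iota(X\times Y)]$, expanding $(\iota^*\sigma_{(1)})^{s_1+s_2}$ binomially and pairing against $[X]\times[Y]$ produces exactly $\binom{s_1+s_2}{s_1}\deg(X)\deg(Y)$, matching the normalized volume of a product. So your route works, and it has the merit of making the geometric content explicit; but it is substantially longer than necessary, because the valuativity argument you already use for $s=n-1$ applies verbatim for arbitrary $s$.
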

\begin{proof}
    For a fixed positive integer $s \leq r(n-r)$, notice that both 
    \begin{align*}
        \Vol_s(P(-)) \colon & S(r,[n]) \to \ZZ \text{ and }\\
        \deg \big((\sum_\lambda d_\lambda(-) \sigma_\lambda )\sigma_{(1)}^s \big) \colon & S(r,[n]) \to \ZZ,
    \end{align*}
    where the sum is over all partitions $\lambda \subseteq r\times(n-r)$ with $|\lambda| = r(n-r)-s$, are valuative matroid invariants. By Theorem 6.3 of \cite{DerksenFink} all valuative matroid invariants are determined by their values on the Schubert matroids $SM_I$. Since the Schubert matroids are representable over $\CC$ we know that $\deg((\sum_\lambda d_\lambda(SM_I) \sigma_\lambda )\sigma_{(1)}^s) = \Vol_s(P(SM_I))$ by \cref{prop:VolDeg}. Hence these invariants are equal for any matroid. The corollary follows from setting $s = n-\kappa(M)$.
\end{proof}

We will use this to compute the Schubert coefficients of sparse paving matroids. In light of the previous section we may assume that all matroids are connected. First we compute The Schubert coefficients of the minimal matroids.
\begin{lemma}
    \label{lem:minimalMatroid}
    The Schubert coefficients of the minimal matroid $T_{r,n}$ are
    $d_\lambda(T_{r,n}) = 0$ for $\lambda \neq h^c$ and $d_{h^c}(T_{r,n}) = \beta(T_{r,n}) = 1$, i.e.\ $\Sc(T_{r,n}) = \sigma_{h^c}$.
\end{lemma}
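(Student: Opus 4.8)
The plan is to pin down $\Sc(T_{r,n})$ by combining the linear relation from the corollary to \cref{prop:VolDeg} with the fact that $T_{r,n}$ is representable over $\CC$. Since $T_{r,n}$ is connected we have $s = n-\kappa(T_{r,n}) = n-1$, so $\Sc(T_{r,n}) \in A^{(r-1)(n-r-1)}(G(r,n))$ and that relation reads
\[
    \sum_{\lambda} c_{\lambda^c}\, d_\lambda(T_{r,n}) = \Vol_{n-1}(P(T_{r,n})),
\]
the sum running over $\lambda \subseteq r\times(n-r)$ with $|\lambda| = (r-1)(n-r-1)$. Two features make this relation rigid. First, by definition $c_{\lambda^c} = \deg(\sigma_{(1)}^{n-1}\sigma_\lambda)$, and by iterated Pieri's formula this is the number of saturated chains from $\lambda$ to $r\times(n-r)$ in the Young lattice, equivalently the number of standard Young tableaux of shape $\lambda^c$; in particular $c_{\lambda^c} \geq 1$ for every $\lambda$ in range. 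Second, $T_{r,n}$ is represented by the matrix in \cref{ex:minimal}, so $d_\lambda(T_{r,n}) = \deg(\Sc(T_{r,n})\sigma_{\lambda^c}) \geq 0$ for all $\lambda$.

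Next I would isolate the $\lambda = h^c$ term. We already know $d_{h^c}(T_{r,n}) = \beta(T_{r,n}) = 1$ (by \cref{thm:Speyer}, or the recursion computed after \cref{ex:minimal}), while $c_{(h^c)^c} = c_h$ is the number of standard Young tableaux of the hook $h = (n-r,1^{r-1})$, which the hook length formula gives as $\binom{n-2}{r-1}$. So the relation becomes
\[
    \binom{n-2}{r-1} + \sum_{\lambda \neq h^c} c_{\lambda^c}\, d_\lambda(T_{r,n}) = \Vol_{n-1}(P(T_{r,n})).
\]
Thus everything reduces to the claim $\Vol_{n-1}(P(T_{r,n})) = \binom{n-2}{r-1}$: granting it, the remaining sum vanishes, and since each $c_{\lambda^c} > 0$ and each $d_\lambda(T_{r,n}) \geq 0$ we get $d_\lambda(T_{r,n}) = 0$ for all $\lambda \neq h^c$, i.e.\ $\Sc(T_{r,n}) = \sigma_{h^c}$.

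The main obstacle is therefore the volume computation, which is where the real work lies. It is available in the literature (see \cite{FerroniMinimal}), but it can also be obtained directly from \cref{ex:minimal}: the bases of $T_{r,n}$ are $[r]$ together with all sets $([r]\setminus\{m\})\cup\{j\}$ for $m\in[r]$ and $j\in[n]\setminus[r]$, so after translating $P(T_{r,n})$ by $-e_{[r]}$ we obtain the convex hull of $0$ and the vectors $e_j - e_m$. Since $\{e_j - e_m\}$ is the sumset $\{-e_m : m\in[r]\} + \{e_j : j\in[n]\setminus[r]\}$, the convex hull of these vectors is the Minkowski sum of an $(r-1)$-simplex and an $(n-r-1)$-simplex supported on complementary coordinates, hence unimodularly equivalent to $\Delta_{r-1}\times\Delta_{n-r-1}$; moreover $0$ does not lie in the affine span of this face, so $P(T_{r,n})$ is a pyramid over it with apex at lattice distance $1$, and its normalized volume equals that of the base. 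Therefore $\Vol_{n-1}(P(T_{r,n})) = \Vol_{n-2}(\Delta_{r-1}\times\Delta_{n-r-1}) = \binom{(r-1)+(n-r-1)}{r-1} = \binom{n-2}{r-1}$, using the staircase unimodular triangulation of a product of two simplices, and the lemma follows.
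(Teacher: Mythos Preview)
Your proof is correct and follows essentially the same route as the paper: use the linear relation of \cref{prop:VolDeg}, identify $c_h = \binom{n-2}{r-1}$ via the hook length formula, recognize $\Vol_{n-1}(P(T_{r,n})) = \binom{n-2}{r-1}$ because $P(T_{r,n})$ is a lattice pyramid over $\Delta_{r-1}\times\Delta_{n-r-1}$, and conclude using positivity of the $c_{\lambda^c}$ and non-negativity of the $d_\lambda(T_{r,n})$ (representability). The only difference is that you supply a self-contained derivation of the polytope description and its volume, whereas the paper appeals to the literature for this fact.
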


\begin{proof}
    By \cref{prop:VolDeg}
    \begin{equation}
        \label{eq:minimalMatroidEq}
        \sum_\lambda c_{\lambda^c} d_\lambda(T_{r,n}) = \Vol(P(T_{r,n})).
    \end{equation}
    The matroid polytope $P(T_{r,n})$ is a pyramid of lattice height $1$ over the product of simplices $\Delta_{r-1} \times \Delta_{n-r-1}$ and hence has volume $\Vol(P(T_{r,n})) = \binom{n-2}{r-1}$; see also example 7.4 of \cite{FerSch}. Next we compute the coefficient of the hook partition $c_h$. By using Pieri's formula and considering the expansion of $\sigma_{(1)}^{n-1}$ in $A^\bullet(G(r,n))$ we see that $c_{\lambda^c} = |SYT(\lambda^c)|$ is the number of standard Young tableaux with shape $\lambda^c$. The following formula for $|SYT(\lambda^c)|$ is given in Theorem $1$ of \cite{FrameRobinsonThrall}\footnote{This paper gives the formula for the degree of the corresponding irreducible representation of the symmetric group on $|\lambda|$ elements. There is a well known basis of this representation in bijection with standard Young tableaux of shape $\lambda$.}:
    \begin{equation*}
        |SYT(\lambda^c)| = \frac{|\lambda^c|!}{\prod_{b\in \lambda^c} hl(b)}
    \end{equation*}
    where the product is over all boxes in the Young diagram of $\lambda^c$ and $hl(b)$ is the hook length of $b$. Applying this formula to the hook partition $h$ gives
    \begin{align*}
        c_h = |SYT(h)| &= \frac{(n-1)!}{(n-1)(r-1)!(n-r-1)!}\\
        &= \frac{(n-2)!}{(r-1)!(n-2 -(r-1))!}\\
        &= \binom{n-2}{r-1}
    \end{align*}
    hence $\Vol(P(T_{k,n})) = c_h$. Since $d_{h^c}(T_{r,n}) = \beta(T_{r,n}) = 1$ \cref{eq:minimalMatroidEq} becomes
    \begin{equation*}
        c_h + \sum_{\lambda\neq h^c} c_{\lambda^c}d_\lambda(T_{r,n}) = \Vol(P(T_{r,n})).
    \end{equation*}
    The integers $c_{\lambda^c}$ are positive since it is the degree of the Schubert variety $X_\lambda(V_\bullet)$ and the Schubert coefficients $d_\lambda(T_{r,n})$ are non-negative since $T_{r,n}$ is representable over $\CC$. Hence $d_\lambda(T_{r,n}) = 0$ for all partitions $\lambda \neq h^c$.
\end{proof}

\begin{remark}
\label{rmk:minimal}
    In light of \cref{lem:minimalMatroid} we might guess that if \(x \in G(r,n)\) represents the minimal matroid \(T_{r,n}\), then the torus orbit closure \(\T\) is equal to the Schubert variety \(X_{h^c}(V_\bullet)\). Indeed one can see that this holds if the $r$-dimensional part of \(V_\bullet\) is the vector space spanned by the standard vectors denoting the non-parallel columns of a matrix representing \(x\).
\end{remark}

By using \cref{lem:minimalMatroid} we are now ready to prove \cref{thm:1}.
\begin{proof}[Proof of \cref{thm:1}]
    Since $M$ is sparse paving there is a matroid subdivision of $P(U_{r,n})$ containing as facets $P(M)$ and $k$ polytopes of matroids isomorphic to $T_{r,n}$. Since $d_\lambda$ are additive and valuative matroid invariants this subdivision yields
    \begin{equation*}
        d_\lambda(U_{r,n}) = d_\lambda(M) + k\cdot d_\lambda(T_{r,n}) \iff d_\lambda(M)  = d_\lambda(U_{k,n}) - k\cdot d_\lambda(T_{r,n}).
    \end{equation*}
    By \cref{lem:minimalMatroid} if $\lambda \neq h^c$ then $d_\lambda(M) = d_\lambda(U_{r,n})$ and $d_{h^c}(M) = \beta(M) = \beta(U_{r,n}) - k$.
\end{proof}
Now \cref{cor:1} follows since $U_{r,n}$ is representable over $\CC$ so $d_\lambda(M) = d_\lambda(U_{r,n})$ and $\beta(M)$ are non-negative.

The proof of \cref{thm:1} applies to the following more general case.
\begin{corollary}
    Let $M$ and $M'$ be connected matroids of rank $r$ on $[n]$ and assume there is a matroid subdivision of $P(M')$ containing as facets $P(M)$ and $k$ polytopes of matroids isomorphic to $T_{r,n}$. Then $d_\lambda(M) = d_\lambda(M')$ for all $\lambda \neq h^c$.
\end{corollary}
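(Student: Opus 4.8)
The plan is to re-run the proof of \cref{thm:1} essentially unchanged, with the matroid $M'$ playing the role that $U_{r,n}$ played there. The only property of $U_{r,n}$ used in that argument was that its polytope admits a matroid subdivision whose facets are $P(M)$ together with $k$ polytopes of matroids isomorphic to $T_{r,n}$, and this is precisely the hypothesis we now impose on $M'$.

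First I would note that, since $M$ and $M'$ are connected of rank $r$ on $[n]$, we have $\kappa(M)=\kappa(M')=1$, so $\Sc(M),\Sc(M')\in A^{r(n-r)-(n-1)}(G(r,n))$; for every $\lambda$ outside this single degree $d_\lambda(M)=d_\lambda(M')=0$ by convention, so the claim is trivial there, and we may restrict attention to partitions $\lambda$ with $|\lambda|=r(n-r)-(n-1)$ (a degree that includes $h^c$). By \cref{thm:BergetFink} each such $d_\lambda$ is a valuative and additive matroid invariant. Applying valuativity to the given subdivision of $P(M')$ and using additivity to kill every term $d_\lambda(P_J)$ with $|J|\ge 2$ --- each such $P_J$ is a proper face, hence the polytope of a disconnected matroid --- the inclusion-exclusion relation collapses to
\[
    d_\lambda(M') = d_\lambda(M) + k\cdot d_\lambda(T_{r,n}).
\]
By \cref{lem:minimalMatroid} we have $d_\lambda(T_{r,n})=0$ whenever $\lambda\neq h^c$, so substituting this in gives $d_\lambda(M)=d_\lambda(M')$ for all $\lambda\neq h^c$, as desired.

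There is no real obstacle: the whole argument is bookkeeping on top of \cref{lem:minimalMatroid} and the additivity half of \cref{thm:BergetFink}. The only points worth a sentence of care are verifying that the relevant codegree $r(n-r)-(n-1)$ is forced by the connectedness hypotheses (so that additivity is available) and that the intermediate intersections $P_J$ in the subdivision are genuinely disconnected (so that the alternating sum telescopes to the two-term identity above); both follow immediately from the dimension formula $\dim P(N)=n-\kappa(N)$ recalled in Section 2. Note also that only the vanishing $d_\lambda(T_{r,n})=0$ for $\lambda\neq h^c$ is needed, not the value $d_{h^c}(T_{r,n})=1$.
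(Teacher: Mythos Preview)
Your proposal is correct and is precisely the paper's approach: the paper states the corollary immediately after \cref{thm:1} with the remark that ``the proof of \cref{thm:1} applies to the following more general case,'' and your write-up carries out exactly that substitution of $M'$ for $U_{r,n}$.
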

\begin{example}
    Some well known examples of matroids that are not representable over $\CC$ are the Fano matroid $F$, the non-Pappus matroid $NP$ and the Vamos matroid $V$. These are sparse paving matroids and the classes of $F$, $NP$ and $V$ in $A^\bullet(G(r,n))$ are shown below.
    \begin{align*}
        \Sc(F) = & 6 \sigma_{(4,2,0)} + 3 \sigma_{(4,1,1)} + 3 \sigma_{(3,3,0)} + 8 \sigma_{(3,2,1)} + \sigma_{(2,2,2)} \in A^\bullet(G(3,7))\\
        \Sc(NP) = & 15 \sigma_{(6,4,0)} + 15 \sigma_{(6,3,1)} + 6 \sigma_{(6,2,2)} + 13 \sigma_{(5,5,0)} \\
        & + 24 \sigma_{(5,4,1)} + 15 \sigma_{(5,3,2)}. + 6 \sigma_{(4,4,2)} + 3 \sigma_{(4,3,3)} \in A^\bullet(G(3,9))\\
        \Sc(V) = & 4 \sigma_{(4,4,1,0)} + 20 \sigma_{(4,3,2,0)} + 12 \sigma_{(4,3,1,1)} + 12 \sigma_{(4,2,2,1)} \\
        & + 15 \sigma_{(3,3,3,0)} + 20 \sigma_{(3,3,2,1)} + 4 \sigma_{(3,2,2,2)} \in A^\bullet(G(4,8))
    \end{align*}
\end{example}

One might try to expand this argument to prove non-negativity of the Schubert coefficients for a class of matroids larger than the sparse paving matroids. In what follows we give an example showing this does not work for paving matroids that are not sparse paving.

As we have seen, sparse paving matroids can be described as those matroids obtained from uniform matroids by cutting of minimal matroid polytopes from the hypersimplex. In a similar way, as described in \cite{HMMMNVYPanhandle}, one can describe paving matroids as those obtained from $U_{r,n}$ by cutting of panhandle matroids. Recall that the panhandle matroid $\Pan_{r,r,n}$ is the minimal matroid $T_{r,n}$ and $\Pan_{r,n-1,n}$ is the uniform matroid $U_{r,n}$. The next example shows that \cref{thm:1} does not hold for paving matroids that are not sparse paving.

\begin{example}
    The smallest non-minimal, non-uniform example of a panhandle matroid is $\Pan_{2,3,5}$. There is a matroid subdivision of $U_{2,5}$ of the form
    \begin{equation}
        \label{eq:PanMinUni}
        P(U_{2,5}) = P(\Pan_{2,3,5}) \cup P(T_{2,5}).
    \end{equation}
    That is $\Pan_{2,3,5}$ is a sparse paving matroid, and $T_{2,5}$ is a paving matroid that is not sparse paving. 
    By \cref{lem:minimalMatroid} and \cref{thm:Klyachko} we have
    \begin{equation*}
        d_{(1,1)}(T_{2,5}) = 0 \neq 1 = d_{(1,1)}(U_{2,5})
    \end{equation*}
    so \cref{thm:1} does not hold for paving matroids in general. For completeness by \cref{thm:Klyachko}, \cref{lem:minimalMatroid} and \cref{eq:PanMinUni} we have
    \begin{align*}
        \Sc(U_{2,5}) &= \sigma_{(1,1)} + 3 \sigma_{(2)}, \\
        \Sc(T_{2,5}) &= \hspace{1.5cm} \sigma_{(2)} \text{ and }\\
        \Sc(\Pan_{2,3,5}) &= \sigma_{(1,1)} + 2\sigma_{(2)}.
    \end{align*}
\end{example}
This leads us to state the following conjecture.
\begin{conj}
    Let $M$ be a connected matroid of rank $r$ on $[n]$ then $M$ is sparse paving if and only if $d_\lambda(M) = d_\lambda(U_{r,n})$ for all partitions $\lambda \neq h^c$.
\end{conj}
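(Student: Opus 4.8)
The forward implication is exactly \cref{thm:1}, so the content is the converse: a connected matroid $M$ of rank $r$ on $[n]$ with $d_\lambda(M) = d_\lambda(U_{r,n})$ for all $\lambda \neq h^c$ must be sparse paving. The plan is to reconstruct, from the hypothesis, a matroid subdivision of $P(U_{r,n})$ of exactly the shape occurring in the subdivision characterisation of sparse paving matroids. First I would reformulate the hypothesis: by \cref{lem:minimalMatroid} one has $\Sc(T_{r,n}) = \sigma_{h^c}$, and $d_{h^c}(N) = \beta(N)$ for every matroid $N$, so the hypothesis says precisely that $\Sc(M) = \Sc(U_{r,n}) - k\,\Sc(T_{r,n})$ with $k = \binom{n-2}{r-1} - \beta(M)$. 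Inserting this into the linear relation of \cref{prop:VolDeg} (in its all-matroids form) gives $\Vol(P(M)) = \Vol(P(U_{r,n})) - k\binom{n-2}{r-1}$; since $P(M) \subseteq P(U_{r,n})$ always, this forces $k \geq 0$, and $k = 0$ forces $P(M) = P(U_{r,n})$, i.e.\ $M = U_{r,n}$, which is sparse paving. So assume $k \geq 1$.

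Next I would fix a matroid subdivision of $P(U_{r,n})$ in which $P(M)$ is a union of facets, with the remaining facets $P(N_1), \dots, P(N_m)$, $m \geq 1$. (That such a subdivision exists is a point requiring care: the non-coordinate facet hyperplanes of $P(M)$ come from flats and are splits of the hypersimplex, so one expects to obtain the subdivision by common refinement, but one must control the compatibility of these splits; I will not belabour this here.) Each $N_j$ is a connected matroid of rank $r$ on $[n]$, since every facet of a matroid subdivision of the $(n-1)$-dimensional polytope $P(U_{r,n})$ is the base polytope of such a matroid. Because $\Sc$ is additive and valuative (\cref{thm:BergetFink}), the subdivision gives $\Sc(U_{r,n}) = \Sc(M) + \sum_j \Sc(N_j)$, and comparing with the reformulation yields $\sum_j \Sc(N_j) = k\,\sigma_{h^c}$; equivalently $\sum_j \beta(N_j) = k$ and $\sum_j d_\lambda(N_j) = 0$ for every $\lambda \neq h^c$. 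By the subdivision characterisation of sparse paving matroids (\cite{FerroniMinimal}, \cite{JoswigSchröter}) it now suffices to show $m = k$ and $N_j \cong T_{r,n}$ for every $j$.

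The route to that conclusion has two parts. (i) Show that each $\Sc(N_j)$ has only non-negative Schubert coefficients; combined with $\sum_j d_\lambda(N_j) = 0$ for $\lambda \neq h^c$, this forces $d_\lambda(N_j) = 0$ for all $j$ and all such $\lambda$, so $\Sc(N_j) = \beta(N_j)\,\sigma_{h^c}$. (ii) Show that a connected matroid $N$ of rank $r$ on $[n]$, with $n$ not too small, satisfying $\Sc(N) = c\,\sigma_{h^c}$ has $c = 1$ and $N \cong T_{r,n}$; here $\Sc(N) = c\,\sigma_{h^c}$ gives $\Vol(P(N)) = c\binom{n-2}{r-1}$, and I would combine this with the uniqueness of $T_{r,n}$ as the connected rank-$r$ matroid on $[n]$ of minimal base-polytope volume (\cite{FerroniMinimal}) together with an estimate excluding $c \geq 2$. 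Part (ii) genuinely needs a size hypothesis such as $n > r+1$: for $(r,n) = (2,4)$ one has $\Sc(U_{2,4}) = 2\,\sigma_{h^c}$, though there the whole statement is vacuous as no partition $\lambda \neq h^c$ exists. Granting (i) and (ii), every $N_j \cong T_{r,n}$, so $m = k$ and the subdivision certifies $M$ sparse paving.

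I expect part (i) to be the main obstacle. It is exactly the non-negativity of the Schubert coefficients of the cells $N_j$, and these cells are in general neither sparse paving nor representable, so neither \cref{cor:1} nor the point-counting argument for representable matroids applies --- (i) is \cref{conj:positivity} restricted to matroids arising as subdivision cells of a hypersimplex, and I see no way to establish it short of that conjecture. As a fallback one may try the contrapositive: if $M$ is connected but not sparse paving, then, after possibly replacing $M$ by $M^*$ (which permutes the $d_\lambda$ and fixes $h^c$), $M$ has a circuit of size at most $r-1$, and one would try to exhibit a non-uniform coefficient $d_\lambda(M) \neq d_\lambda(U_{r,n})$ with $\lambda \neq h^c$ by separating off this small circuit via a subdivision and \cref{cor:disconnected}. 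That approach meets the same difficulty --- controlling $d_\lambda$ for non-representable matroids --- so I regard a positivity statement for subdivision cells as the decisive missing input.
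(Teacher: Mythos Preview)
The statement you are attempting is labeled a \emph{conjecture} in the paper, not a theorem, and the paper gives no proof of the reverse implication. The forward implication is \cref{thm:1}; beyond that the paper only records the motivating example $d_{(1,1)}(T_{2,5}) = 0 \neq 1 = d_{(1,1)}(U_{2,5})$, which shows that one particular non-sparse-paving connected matroid fails the condition. So there is no ``paper's own proof'' to compare your proposal against.

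As for the outline itself, you have already put your finger on why it is not a proof. Three genuine gaps remain. First, the existence of a matroid subdivision of $P(U_{r,n})$ in which $P(M)$ appears as a union of facets is not established: the facet hyperplanes of $P(M)$ are indeed hypersimplex splits, but arbitrary collections of splits need not be compatible, and the paper supplies nothing toward this. Second, your step (i) is precisely \cref{conj:positivity} for the cells $N_j$, and neither the paper nor your argument gives any leverage on it; without it the vanishing $\sum_j d_\lambda(N_j) = 0$ does not force each summand to vanish. Third, in step (ii) you invoke ``uniqueness of $T_{r,n}$ as the connected rank-$r$ matroid on $[n]$ of minimal base-polytope volume'', but the minimality result in \cite{FerroniMinimal} concerns the number of bases, not $\Vol(P(N))$; the two are not the same invariant, so this step needs an independent argument. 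Your fallback via the contrapositive runs into the same positivity obstruction, as you note. In short, the proposal is a reasonable roadmap, but it is conditional on inputs at least as strong as the open \cref{conj:positivity}, which is why the paper leaves the statement as a conjecture.
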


\section{Acknowledgments}

This research is supported in part by the Trond Mohn Foundation project ''Algebraic and topological cycles in complex and tropical geometry'' and the Center for Advanced Study Young Fellows Project ''Real Structures in Discrete, Algebraic, Symplectic, and Tropical Geometries''.
We would like to thank Kris Shaw, Alex Fink, Andrew Berget, Lorenzo Vecchi and Benjamin Schröter for all the help with and interest in this project. We would also like to thank the reviewer of a previous version of this paper for the insight of \cref{rmk:minimal}.

\printbibliography

\end{document}